\newtheorem{theorem}{Theorem}[section]
\newtheorem{lemma}[theorem]{Lemma}
\newtheorem{corollary}{Corollary}[section]
\newtheorem{proposition}{Proposition}[section]
\newtheorem{problem}{Problem}
\theoremstyle{definition}
\newtheorem{example}[theorem]{Example}
\theoremstyle{remark}
\newtheorem{remark}[theorem]{Remark}
\numberwithin{equation}{section}
\begin{document}

\title{Integral curvature and topological obstructions for submanifolds}

\author{Theodoros Vlachos}
\address{Department of Mathematics, University of Ioannina, 45110 Ioannina, Greece}
\email{tvlachos@uoi.gr}

\subjclass[2000]{Primary 53C40, 53C20; Secondary 53C42}

\keywords{Curvature tensor, second fundamental form,  scalar curvature,  $L^{n/2}$-norm of curvature, Betti numbers, flat bilinear forms}

\begin{abstract}
We provide integral curvature bounds for compact Riemannian manifolds that allow isometric immersions into a Euclidean space with low codimension
 in terms of the Betti numbers.
\end{abstract}

\maketitle

\section{Introduction}
According to Nash's embedding theorem, every Riemannian manifold can be isometrically immersed into a Euclidean space with sufficiently high codimension.
On the other hand, 
there are results that  put several restrictions on existence of isometric immmersions of a   Riemannian manifold
 in a Euclidean space, when the  codimension is low. 
A classical theorem  due to Chern, Kuiper and Otsuki \cite{CK,O} states that a compact $n$-dimensional
 Riemannian manifold of non-positive sectional curvature cannot be isometrically  immersed into $\mathbb{R}^{2n-1}$.
  Moore \cite{Mo1,Mo2,Mo3,Mo4} investigated topological restrictions on positively curved manifolds that allow isometric immersions 
 in a Euclidean space with low codimension. 
In \cite{Mo1} he proved that any compact $n$-dimensional Riemannian manifold with positive sectional curvature that 
allows an isometric immersion in $\mathbb{R}^{n+2}$ is homeomorphic to the sphere $S^{n}$. Moreover, in \cite{Mo3, Mo4} it was proved
  that a compact $n$-dimensional Riemannian manifold  with constant sectional curvature that admits an isometric immersion in
 $\mathbb{R}^{2n-1}$ is isometric to a round sphere. 

All these results  require conditions on the range of the sectional curvature. It is natural to ask whether
 the above restrictions on isometric immersions are maintained when one only assumes  integral curvature bounds. In this paper, we 
address the following more general problem:

\begin{problem}
What kind of restrictions, 
integral curvature bounds   impose on isometric immersions in the Euclidean space with low codimension? 
\end{problem}

We are interested in bounds for the
 $L^{n/2}$-norm of 
the tensors $R-kR_{1}$ and $R-\frac{{\mathrm{scal}}}{n(n-1)}R_{1}$ for compact $n$-dimensional 
 Riemannian manifolds $(M^n,g)$.  Here $R$ and ${\mbox{scal}}$ denote respectively the  curvature tensor and the scalar 
curvature of $g$,   $k$ is a constant, and $R_1=-\frac{1}{2}g\bullet g$, where $\bullet$ stands for the Kulkarni-Nomizu product. 
Particular case of the above question is the following problem that was posed by 
Shiohama and Xu \cite{SX}.

\begin{problem}
 Let $M^n$ be a  compact $n$-dimensional  Riemannian manifold ($ n\geq3$) 
 which can be isometrically immersed into $\mathbb{R}^{2n-1}$. Does there exist 
a positive constant $\varepsilon(n)$ depending only on $n$ such that if 
$$ \int _{M}\big\Vert R-\frac{{\mathrm{scal}}}{n(n-1)}R_{1}\big\Vert^{n/2}dM < \varepsilon(n),$$
 then $M^n$ is homeomorphic to $S^n$?
\end{problem}

Shiohama and Xu \cite{SX}, affirmatively answered this problem for the case of hypersurfaces.

The aim of the paper is to make a contribution to the above problems for compact
 Riemannian manifolds that allow isometric 
immersions in a Euclidean space, with low codimension $p \geq2$, by providing  topological bounds for the $L^{n/2}$-norm of 
the tensors $R-kR_{1}$ and $R-\frac{{\mathrm{scal}}}{n(n-1)}R_{1}$ .  Throughout the paper, 
all manifolds under consideration are assumed to be connected, without boundary and oriented. We prove the following results.

\begin{theorem}
Given $n \geq 4,   k \in \mathbb{R}$  and $ \lambda>0$, there exists a constant  
$\varepsilon=\varepsilon(n,k,\lambda)>0$, such that if $(M^n,g)$ is a compact, $n$-dimensional Riemannian manifold
 that admits an isometric immersion in $\mathbb{R}^{n+p}$, $2 \leq p\leq n/2 $, so that the scalar curvature and 
 the second fundamental form $\alpha$ satisfy $|\mathrm{scal}|\geq\lambda\Vert\alpha\Vert^2$, then
\begin{equation*}
 \int _{M}\big\Vert R-kR_{1}\big\Vert^{n/2}dM \geq \varepsilon(n,k,\lambda)I,
\end{equation*}
where
$$
I:=\left\{\begin{array}{l}\sum_{i=p}^{n-p}\beta_{i}\;\;\;\;\;\mbox{if}\;\;k>0,
\vspace*{1.5ex}\\
\sum_{i=0}^{n}\beta_{i}\;\;\;\;\;\; \mbox{if}\;\;k\leq 0,
\end{array} \right.
$$
and $\beta_{i}$ is the $i$-th Betti number of $M^n$ with respect to an arbitrary coefficient field.
Furthermore, 

(i)
If $k>0$ and 
\begin{equation*}
 \int _{M}\big\Vert R-kR_{1}\big\Vert^{n/2}dM < \varepsilon(n,k,\lambda),
\end{equation*}
then $M^n$ has the homotopy type of a CW-complex with no cells of dimension $i$ for $p\leq i\leq n-p$.

(ii) For each $k \leq 0, \lambda>0$ and $ a>0$, the class $ \mathcal{M}(n,k,\lambda,a)$ 
of all compact $n$-dimensional Riemannian manifolds with $\int _{M}\Vert R-kR_{1}\Vert^{n/2}dM<a$, that are isometrically
 immersed into $\mathbb{R}^{n+p}$, $2 \leq p\leq n/2 $,  
so that the scalar curvature and  the second fundamental form satisfy  $|\mathrm{scal}|\geq\lambda\Vert\alpha\Vert^2$, 
contains at most finitely many homeomorphism types.
\end{theorem}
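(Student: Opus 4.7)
The plan is to combine a pointwise estimate of $\|\alpha\|$ by $\|R-kR_1\|$, a Chern--Lashof style Morse-theoretic inequality for submanifolds, and Moore's theory of flat bilinear forms (used in a quantitative near-flat version) to handle the refined range of indices when $k>0$.

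First, I would establish a pointwise bound of the form
\[
\|\alpha\|^2 \leq c_1(n,k,\lambda) + c_2(n,k,\lambda)\,\|R-kR_1\|.
\]
The idea is to split $R-kR_1 = W + (\mathrm{scal}/n(n-1) - k)R_1$, where $W = R - (\mathrm{scal}/n(n-1))R_1$ is the traceless part. Since $\|R_1\|$ is a nonzero universal constant, $\|R-kR_1\|$ dominates $|\mathrm{scal} - kn(n-1)|$ up to a constant, and the Gauss identity $\mathrm{scal} = n^2|H|^2 - \|\alpha\|^2$ together with the hypothesis $|\mathrm{scal}|\geq\lambda\|\alpha\|^2$ yields the bound. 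In particular, on the set where $\|\alpha\|^2$ exceeds a threshold depending on $n,k,\lambda$, the estimate refines to $\|\alpha\|^n \leq C\|R-kR_1\|^{n/2}$.

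Second, I would apply the Chern--Lashof approach through height functions of the immersion. For almost every $\xi\in S^{n+p-1}$, the height function $h_\xi(x)=\langle f(x),\xi\rangle$ is Morse, with Hessian at a critical point $x$ equal to the symmetric form $\langle \alpha_x,\xi\rangle$. Integrating the Morse inequalities over $\xi$ via the Gauss map gives $\sum_i \beta_i \leq C(n,p)\int_M \|\alpha\|^n\,dM$, which combined with the pointwise bound from the first step yields the theorem in the case $k\leq 0$. For $k>0$, the codimension constraint $p\leq n/2$ is exploited through Moore's lemma on flat bilinear forms: if $R = kR_1$ pointwise, then for any normal direction $\xi$ the symmetric form $\langle \alpha,\xi\rangle$ has either fewer than $p$ positive or fewer than $p$ negative eigenvalues, so its Morse index avoids the range $[p,n-p]$. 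A quantitative near-flat version of this assertion restricts the relevant critical points of the height functions and upgrades the Chern--Lashof estimate to
\[
\sum_{i=p}^{n-p}\beta_i \leq \varepsilon(n,k,\lambda)^{-1}\int_M \|R-kR_1\|^{n/2}\,dM.
\]

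Part (i) then follows: if $\int\|R-kR_1\|^{n/2}<\varepsilon$, a generic height function can be chosen with no critical points of index in $[p,n-p]$, and the Morse handle-decomposition yields a CW model without cells in this range. For part (ii), the first step converts the uniform $L^{n/2}$ bound on $\|R-kR_1\|$ into a uniform $L^n$ bound on $\alpha$, which combined with a diameter estimate (obtained from the scalar curvature hypothesis together with a Sobolev-type inequality on submanifolds) allows one to invoke a Cheeger-type finiteness theorem for homeomorphism types. The principal obstacle will be the quantitative near-flat version of Moore's lemma needed for $k>0$, and, secondarily, establishing a uniform diameter bound on the class $\mathcal{M}(n,k,\lambda,a)$ to support the finiteness argument.
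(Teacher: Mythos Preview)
Your outline has the right architecture---Chern--Lashof/Morse theory fed by a pointwise algebraic inequality for the second fundamental form---but the inequality you extract in step~1 is too weak to close the argument for $k\neq 0$, and your route to part~(ii) is not the one that works.

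Step~1 gives $\Vert\alpha\Vert^2 \le c_1 + c_2\Vert R-kR_1\Vert$ with $c_1 = |k|n(n-1)/\lambda$. When $k=0$ this suffices: $c_1=0$, so $|\det A_\xi|\le C\Vert\alpha\Vert^n \le C'\Vert R\Vert^{n/2}$ pointwise and Chern--Lashof finishes. For $k\neq 0$ the additive constant cannot be absorbed: integrating produces a term $c_1^{n/2}\,\mathrm{Vol}(M)$, and nothing in your outline bounds the volume; your refinement to the region where $\Vert\alpha\Vert$ is large does not handle the complementary set. Worse, for $k>0$ the pointwise bound $\Vert\alpha\Vert^n \le C\Vert R-kR_1\Vert^{n/2}$ is actually false under the hypothesis: the umbilic form $\alpha(x,y)=\sqrt{k}\langle x,y\rangle\xi$ has $R=kR_1$, $\Vert\alpha\Vert^2=kn>0$, and $|\mathrm{scal}|/\Vert\alpha\Vert^2=n-1$. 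The paper never detours through $\Vert\alpha\Vert^n$; it proves directly, by a compactness argument in $\mathrm{Hom}(V\times V,W)$ combined with Moore's theory of flat bilinear forms, the pointwise inequality
\[
\big\Vert\alpha\bullet\alpha - k\,g\bullet g\big\Vert^{2} \;\ge\; c(n,p,k,\lambda)\Big(\int_{\Omega_k(\alpha)} |\det A_\xi|\,dS_\xi\Big)^{4/n},
\]
where $\Omega_k(\alpha)=S^{p-1}$ for $k\le 0$ and $\Omega_k(\alpha)=\{\xi:p\le\mathrm{Index}(A_\xi)\le n-p\}$ for $k>0$. This is precisely the ``quantitative near-flat Moore lemma'' you flag as the principal obstacle for $k>0$, and it is equally the crux for $k<0$: the compactness argument there hinges on the fact that $\alpha\bullet\alpha=k\,g\bullet g$ with $p\le n-2$ forces $k\ge 0$, which is again Moore's flat-bilinear-form input.

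For part~(ii) the paper does not invoke a Cheeger-type finiteness theorem; your route would need two-sided sectional curvature bounds, a diameter bound, and a lower volume bound, none of which follow from the hypotheses. Instead, the main inequality (with $k\le 0$) gives $\sum_i\tau_i<a/\varepsilon$, hence a height function on each $M\in\mathcal{M}(n,k,\lambda,a)$ with a uniformly bounded number of critical points, and finiteness of homeomorphism types is argued directly from this bounded Morse number.
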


\begin{theorem}
Given $n \geq 4$   and $\lambda>0$, there exists a constant 
$\varepsilon=\varepsilon(n,\lambda)>0$, such that if $(M^n,g)$ is a compact, $n$-dimensional Riemannian manifold
 that admits an isometric immersion in $\mathbb{R}^{n+p}$, $2 \leq p\leq n/2 $, so that the scalar curvature and 
 the second fundamental form satisfy $|\mathrm{scal}|\geq\lambda\Vert\alpha\Vert^2$, then
\begin{equation*}
 \int _{M}\Big\Vert R-\frac{{\mathrm{scal}}}{n(n-1)}R_{1}\Big\Vert^{n/2}dM
 \geq \varepsilon(n,\lambda )\sum_{i=p}^{n-p}\beta_{i}.
\end{equation*}
 In particular, if 
\begin{equation*}
 \int _{M}\Big\Vert R-\frac{{\mathrm{scal}}}{n(n-1)}R_{1}\Big\Vert^{n/2}dM < \varepsilon(n,\lambda ),
\end{equation*}
then $M^n$ has the homotopy type of a CW-complex with no cells of dimension $i$ for $p\leq i\leq n-p$. Moreover,

(i) If the scalar curvature of $M^n$ is everywhere non-positive, then
\begin{equation*}
 \int _{M}\Big\Vert R-\frac{{\mathrm{scal}}}{n(n-1)}R_{1}\Big\Vert^{n/2}dM
 \geq \varepsilon(n,\lambda )\sum_{i=0}^{n}\beta_{i}.
\end{equation*}

(ii) For each $\lambda >0$ and $a>0$, the class $ \mathcal{M}(n,\lambda,a)$ of all compact $n$-dimensional Riemannian
 manifolds with $\int _{M}\Vert R-\frac{{\mathrm{scal}}}{n(n-1)}R_{1}\Vert^{n/2}dM<a$, that are isometrically
immersed into $\mathbb{R}^{n+p}$, $2 \leq p\leq n/2 $, so that the scalar curvature 
 and the second fundamental form satisfy  $\mathrm{scal}\leq-\lambda\Vert\alpha\Vert^2$, 
contains at most finitely many homeomorphism types.
\end{theorem}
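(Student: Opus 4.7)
The plan is to argue in parallel with Theorem 1, replacing the constant $k$ by the pointwise scalar function $\mathrm{scal}/(n(n-1))$. The starting point is the pointwise orthogonal decomposition
\begin{equation*}
\Vert R-kR_{1}\Vert^{2}=\Big\Vert R-\frac{\mathrm{scal}}{n(n-1)}R_{1}\Big\Vert^{2}+2n(n-1)\Big(\frac{\mathrm{scal}}{n(n-1)}-k\Big)^{2},
\end{equation*}
valid for every constant $k$, which identifies $\frac{\mathrm{scal}}{n(n-1)}R_{1}$ as the pointwise $L^{2}$-closest multiple of $R_{1}$ to $R$. In particular, the integrand of Theorem 2 is pointwise no larger than that of Theorem 1 for any constant $k$, so Theorem 2 is strictly stronger than Theorem 1 and does not follow from it by substitution.

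The heart of the argument is a pointwise inequality of the form
\begin{equation*}
\Big\Vert R-\frac{\mathrm{scal}}{n(n-1)}R_{1}\Big\Vert^{2}\geq c(n,\lambda)\,\Vert\alpha\Vert^{4},
\end{equation*}
derived from the Gauss equation $R_{ijkl}=\langle\alpha_{il},\alpha_{jk}\rangle-\langle\alpha_{ik},\alpha_{jl}\rangle$ together with the scalar curvature identity $\mathrm{scal}=n^{2}|H|^{2}-\Vert\alpha\Vert^{2}$. The left-hand side vanishes precisely at umbilic points (where $R=\frac{\mathrm{scal}}{n(n-1)}R_{1}$), so any such estimate must delicately exploit the hypothesis; the assumption $|\mathrm{scal}|\geq\lambda\Vert\alpha\Vert^{2}$ is designed to keep the ratio $n^{2}|H|^{2}/\Vert\alpha\Vert^{2}$ at a definite distance from the umbilic value $1$, providing the needed coercivity away from umbilic regions. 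After diagonalizing the shape operators in a suitable frame and using the constraint on $|H|$, the inequality should reduce to an algebraic comparison between symmetric functions of the principal curvatures in each normal direction.

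With such an estimate in hand, the remainder of the proof parallels Theorem 1. For a generic unit vector $\xi\in\mathbb{R}^{n+p}$, the height function $h_{\xi}=\langle f,\xi\rangle$ is Morse and its Hessian at a critical point equals $-\langle\alpha(\cdot,\cdot),\xi\rangle$. The Chern-Kuiper-Otsuki lemma on flat bilinear forms, combined with the codimension assumption $2\leq p\leq n/2$, forces the Morse index of $h_{\xi}$ at each critical point to lie in $[p,n-p]$. Coupled with a Gromov-type quantitative Morse bound of the form $\beta_{i}(M)\leq C(n)\int_{M}\Vert\alpha\Vert^{n}dM$ in this range, one obtains
\begin{equation*}
\sum_{i=p}^{n-p}\beta_{i}\leq C(n,\lambda)\int_{M}\Big\Vert R-\frac{\mathrm{scal}}{n(n-1)}R_{1}\Big\Vert^{n/2}dM,
\end{equation*}
and the CW-complex statement follows from standard Morse theory, since sublevel sets acquire cells only in dimensions realized as Morse indices.

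For part (i), when $\mathrm{scal}\leq 0$ throughout $M$, the shape operators $\langle\alpha,\xi\rangle$ may be negative-definite on subspaces of arbitrary dimension, so Moore's bilinear form analysis no longer confines the Morse indices to the middle range, and the analogous estimate follows with $\sum_{i=0}^{n}\beta_{i}$ on the left. For part (ii), the hypothesis $\mathrm{scal}\leq-\lambda\Vert\alpha\Vert^{2}$, together with the identity $\Vert R\Vert^{2}=\Vert R-\tfrac{\mathrm{scal}}{n(n-1)}R_{1}\Vert^{2}+\tfrac{2\mathrm{scal}^{2}}{n(n-1)}$ and the Gauss-equation bound $\Vert R\Vert^{2}\leq C(n)\Vert\alpha\Vert^{4}\leq C'(n,\lambda)\mathrm{scal}^{2}$, converts the integral bound of the theorem into a uniform bound on $\int_{M}\Vert R\Vert^{n/2}dM$; the Cheeger-Gromov-Anderson finiteness theorem for classes of Riemannian manifolds with bounded integral curvature then yields at most finitely many homeomorphism types. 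The central obstacle I anticipate is the pointwise inequality above: because the left-hand side degenerates at umbilic points where $\Vert\alpha\Vert$ does not, the constant $c(n,\lambda)$ must be extracted through a careful algebraic argument that tracks precisely how the hypothesis forces $\alpha$ to deviate from umbilicity.
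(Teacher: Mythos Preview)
Your central pointwise inequality
\[
\Big\Vert R-\tfrac{\mathrm{scal}}{n(n-1)}R_{1}\Big\Vert^{2}\geq c(n,\lambda)\,\Vert\alpha\Vert^{4}
\]
is simply false, and the hypothesis $|\mathrm{scal}|\geq\lambda\Vert\alpha\Vert^{2}$ does not rescue it. Take a totally umbilic round sphere in $\mathbb{R}^{n+1}\subset\mathbb{R}^{n+p}$: then $\alpha(X,Y)=\langle X,Y\rangle H$, so $R=\tfrac{\mathrm{scal}}{n(n-1)}R_{1}$ exactly and the left side vanishes, while $\Vert\alpha\Vert^{2}=n|H|^{2}>0$. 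Yet $\mathrm{scal}/\Vert\alpha\Vert^{2}=n-1$, so the hypothesis holds for any $\lambda\leq n-1$. Your own closing paragraph anticipates trouble at umbilic points but conjectures that the hypothesis forces non-umbilicity; it does not.

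The paper avoids this by proving a very different pointwise inequality (Proposition~2.4):
\[
\Big\Vert\alpha\bullet\alpha-\tfrac{\mathrm{sc}(\alpha)}{n(n-1)}\langle.,.\rangle\bullet\langle.,.\rangle\Big\Vert^{2}
\geq c(n,p,\delta)\Big(\int_{\Lambda(\alpha)}|\det A_{u}|\,dS_{u}\Big)^{4/n},
\]
where, when $\mathrm{sc}(\alpha)>0$, $\Lambda(\alpha)=\Omega(\alpha)$ is the set of unit normal directions $u$ with $p\leq\mathrm{Index}(A_{u})\leq n-p$. At an umbilic point $\Omega(\alpha)=\emptyset$ (every $A_{u}$ is a multiple of the identity, hence has index $0$ or $n$), so both sides vanish and the inequality is vacuous there. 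The Betti numbers $\beta_{p},\dots,\beta_{n-p}$ then enter not because some flat-form lemma confines \emph{all} Morse indices to $[p,n-p]$ --- that claim is incorrect: every compact immersion has height functions with index-$0$ and index-$n$ critical points --- but because the Shiohama--Xu refinement of Chern--Lashof identifies $\int_{U^{i}N_{f}}|\det A_{\xi}|\,d\Sigma$ with $\mathrm{Vol}(S^{n+p-1})\tau_{i}$, and integrating the algebraic inequality over $M$ picks up precisely $\sum_{i=p}^{n-p}\tau_{i}\geq\sum_{i=p}^{n-p}\beta_{i}$.

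Your route for part~(ii) also has a gap: a bound on $\int_{M}\Vert R\Vert^{n/2}$ alone does not feed into Cheeger--Gromov--Anderson finiteness, which requires further geometric control (volume, diameter, or injectivity radius) that is not given here. The paper instead uses the bound on total absolute curvature to produce a Morse height function with a uniformly bounded number of critical points, hence a uniform bound on the number of $n$-disks needed to cover $M$.
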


The idea for the proofs is to relate the $L^{n/2}$-norm of the tensors $R-kR_{1}$ and $R-\frac{{\mathrm{scal}}}{n(n-1)}R_{1}$
 with the Betti numbers using well-known results of Chern and Lashof \cite{CL1,CL2}, Morse theory and the Gauss equation. To 
this purpose, we need two algebraic inequalities for symmetric bilinear forms (see Propositions 2.3 and 2.4 below). 
The reason for imposing the additional assumption
on the bound of the ratio of the scalar curvature to the squared  length of the second fundamental form is 
 that these inequalities fail without this condition. 
Actually, we provide counterexamples that justify the necessity of this assumption. 

The main ingredient for the proof of the inequalities is  
 the theory of flat bilinear forms which was introduced by Moore \cite{Mo2, Mo3} as an outgrowth of Cartan's 
theory of exteriorly orthogonal quadratic forms \cite{Ca}. This theory plays an essential
 role in the study of isometric immersions (cf. \cite{CaDa,Da, Da1, Da2}).

Our results can be formulated in terms of the curvature operator $\mathcal{R}:\wedge^{2}TM\longrightarrow \wedge^{2}TM$. 
In fact, it is easy to see that 
$$ \Vert R-kR_{1}\Vert=\Vert \mathcal{R}-kId \Vert\;\; \text{and}\;\; 
\Big\Vert R-\frac{{\mathrm{scal}}}{n(n-1)}R_{1}\Big\Vert = \Big\Vert \mathcal{R}-\frac{{\mathrm{scal}}}{n(n-1)}Id \Big\Vert,$$
where $Id$  stands for the identity map on $\wedge^{2}TM$. In particular, we have the following

\begin{corollary}
 Let $(M^n,g)$ be a compact $n$-dimensional Riemannian manifold with positive curvature operator $\mathcal{R}$  
that admits an isometric immersion in $\mathbb{R}^{n+p}, 2 \leq p\leq n/2$,  so that the scalar curvature and 
 the second fundamental form satisfy $|\mathrm{scal}|\geq\lambda\Vert\alpha\Vert^2, \lambda >0$. If 
$$\int _{M}\Big\Vert \mathcal{R}-\frac{{\mathrm{scal}}}{n(n-1)}Id  \Big\Vert^{n/2}dM< \varepsilon(n,\lambda), $$
 then $M^n$ is diffeomorphic to the sphere $S^{n}$. 
\end{corollary}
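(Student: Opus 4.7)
The plan is to derive this corollary by combining Theorem 2 with the Böhm--Wilking differentiable sphere theorem for manifolds with positive curvature operator, together with a topological argument ruling out non-trivial spherical space forms.

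First, I would apply Theorem 2 directly: its hypotheses are contained in those of the corollary, so whenever the integral is less than $\varepsilon(n,\lambda)$ the ``in particular'' clause of Theorem 2 applies and $M^n$ has the homotopy type of a CW-complex with no cells in dimensions $p\leq i\leq n-p$. In particular $H^i(M;\mathbb{Z})=0$ for $p\leq i\leq n-p$, and every Betti number $\beta_i(M;\mathbb{F})$ vanishes in this range for any coefficient field $\mathbb{F}$. Second, since $\mathcal{R}$ is positive, the Böhm--Wilking convergence theorem for the Ricci flow (generalizing Hamilton's original result in dimension three) shows that the normalized flow starting at $g$ deforms $g$ into a metric of constant positive sectional curvature, so $M^n$ is diffeomorphic to a spherical space form $S^n/\Gamma$ for a finite subgroup $\Gamma$ of $\mathrm{O}(n+1)$ acting freely on $S^n$.

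The remaining task is to show that $\Gamma$ is trivial. For $n$ even, Synge's theorem (orientability together with positive sectional curvature in even dimension yields simple connectivity) forces $\Gamma=1$ at once. For $n$ odd, I would argue by contradiction: assume $\Gamma\neq1$ and pick a prime $\ell$ dividing $|\Gamma|$. Since the classifying map $M\to B\Gamma$ is an $n$-equivalence (its homotopy fibre $S^n$ is $(n-1)$-connected), the Cartan--Leray spectral sequence for $S^n\to S^n/\Gamma$ yields $H^i(M;\mathbb{F}_\ell)\cong H^i(\Gamma;\mathbb{F}_\ell)$ for $0<i<n$. Combining Swan's theorem on periodic cohomology with restriction to a cyclic subgroup of order $\ell$ inside $\Gamma$, one produces non-zero classes in $H^i(\Gamma;\mathbb{F}_\ell)$ at some degree $i$ in the middle interval $[p,n-p]$, contradicting the first step and forcing $\Gamma=1$.

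The main obstacle I expect is this last cohomological detection: verifying that every non-trivial $\Gamma$ acting freely on $S^n$ with $n$ odd has a non-vanishing $\mathbb{F}_\ell$-cohomology class in the middle range $[p,n-p]$ for some prime $\ell\mid|\Gamma|$. This requires the classification of such $\Gamma$ (Sylow subgroups cyclic, or generalized quaternion when $\ell=2$) together with a careful restriction/transfer analysis, exploiting that the width $n-2p$ of the middle range combined with the periodicity of $H^*(\Gamma;\mathbb{F}_\ell)$ always suffices to catch a non-trivial class. Everything else is either a direct citation of Theorem 2 or a standard application of Böhm--Wilking and Synge.
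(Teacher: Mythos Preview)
Your overall architecture—apply Theorem 1.2 to kill the middle Betti numbers, invoke B\"ohm--Wilking to identify $M$ with a space form $S^n/\Gamma$, then rule out $\Gamma\neq 1$ cohomologically—matches the paper's. The substantive difference is in the last step. You work directly with $\Gamma=\pi_1(M)$ and try to locate a nonzero class in $H^i(\Gamma;\mathbb{F}_\ell)$ for some $i\in[p,n-p]$; the paper instead passes first to the intermediate Riemannian cover $\widetilde M\to M$ with $\pi_1(\widetilde M)\cong\mathbb{Z}_q$ for a prime $q\mid|\Gamma|$, applies Theorem 1.2 to the lifted immersion $f\circ\pi$, and then simply quotes $H_p\bigl(K(\mathbb{Z}_q,1);\mathbb{Z}_q\bigr)=\mathbb{Z}_q$ to obtain $H_p(\widetilde M;\mathbb{Z}_q)\neq 0$. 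This covering trick dissolves exactly the obstacle you flag: for a cyclic group of prime order the mod-$q$ cohomology is nonzero in \emph{every} degree, so no range analysis, no Swan, no case split on generalized quaternion Sylows, and no separate appeal to Synge in even dimension. Your worry about the direct route is well founded and not merely technical: take $\Gamma=2I$, the binary icosahedral group (perfect, cohomological period $4$, acts freely on $S^{4k-1}$); then $H^i(\Gamma;\mathbb{F}_\ell)=0$ for all $i\equiv 1,2\pmod 4$ and every prime $\ell\mid 120$, so for $n=11$, $p=5$ the window $[5,6]$ catches nothing and your restriction/periodicity detection cannot succeed for any choice of $\ell$. On the other side of the ledger, your route has the virtue of applying Theorem 1.2 to $M$ itself; the paper's application on the cover $\widetilde M$ tacitly requires the integral bound there, which is $[\pi_1(M):\mathbb{Z}_q]$ times the bound on $M$, a point the paper does not address.
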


\section{Algebraic auxiliary results}
This section is devoted to some algebraic results that are crucial for the proofs. Let $V, W$ be real vector spaces,
 equipped with nondegenerate inner products which by abuse of notation are both  denoted by $\langle .,. \rangle$. 
The inner product of $V$ is assumed to be positive definite. We consider the space ${\mathrm {Hom}}(V\times V, W)$ of symmetric $W$-valued
 bilinear forms on $V$. This space may be viewed as a complete metric space by means of the usual Euclidean norm $\Vert.\Vert$. 
Recall that the Kulkarni-Nomizu product $\varphi \bullet \psi$ of two bilinear forms $\varphi, \psi \in {\mathrm {Hom}}(V\times V, \mathbb{R})$
 is the (0,4)-tensor $\varphi \bullet \psi: V\times V\times V\times V\longrightarrow \mathbb{R}$ given by 
\begin{eqnarray*}
 \varphi \bullet \psi (x_{1},x_{2},x_{3},x_{4}):=\varphi(x_{1},x_{3})\psi(x_{2},x_{4}) +
 \varphi(x_{2},x_{4})\psi(x_{1},x_{3})\\ -\varphi(x_{1},x_{4})\psi(x_{2},x_{3}) - \varphi(x_{2},x_{3})\psi(x_{1},x_{4}),
\end{eqnarray*}
where ${\ }x_{1},x_{2},x_{3},x_{4} \in V$.

Using the inner product of $W$, we extend the Kulkarni-Nomizu product for bilinear forms $\beta, \gamma \in {\mathrm {Hom}}(V\times V, W)$
 as the (0,4)-tensor $\beta \bullet \gamma: V\times V\times V\times V\longrightarrow \mathbb{R}$ given by 
\begin{eqnarray*}
 \beta \bullet \gamma (x_{1},x_{2},x_{3},x_{4}):=\langle\beta(x_{1},x_{3}),\gamma(x_{2},x_{4})\rangle +
 \langle\beta(x_{2},x_{4}),\gamma(x_{1},x_{3})\rangle\\ -\langle\beta(x_{1},x_{4}),\gamma(x_{2},x_{3})\rangle -
 \langle\beta(x_{2},x_{3}),\gamma(x_{1},x_{4})\rangle.
\end{eqnarray*}
A bilinear form $\beta \in {\mathrm {Hom}}(V\times V, W)$ is called \textit{flat} with respect to the inner product
 $\langle .,.\rangle$ of $W$ if and only if 
\begin{equation*}
\langle\beta(x_{1},x_{3}),\beta(x_{2},x_{4})\rangle-\langle\beta(x_{1},x_{4}),\beta(x_{2},x_{3})\rangle=0
 \end{equation*}
for all ${\ }x_{1},x_{2},x_{3},x_{4} \in V$, or equivalently if and only if $\beta \bullet \beta=0$.

Associated to a bilinear form $\beta \in {\mathrm {Hom}}(V\times V, W)$ is the  \textit{nullity space}  $N(\beta)$  defined by 
\begin{eqnarray*}
N(\beta)=\lbrace x \in V: \beta(x,y)=0 {\ }{\ }{\mbox {for all}}{\ }{\ } y \in V \rbrace.
\end{eqnarray*} 

We need the following results on flat bilinear forms, the proofs of which can be found in \cite{Mo2,Mo3,Da}.

\begin{proposition}
 Let $\beta \in {\mathrm {Hom}}(V\times V, W)$ be a flat bilinear form with respect to a positive definite inner product of $W$. Then 
\begin{equation*}
 {\mathrm {dim}}N(\beta)\geq {\mathrm {dim}}V-{\mathrm {dim}}W.
\end{equation*}
\end{proposition}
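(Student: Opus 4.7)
The plan is to place the required nullity inside $\ker \beta_v$ for $v \in V$ of maximal rank, where $\beta_v : V \to W$ denotes the linear map $y \mapsto \beta(v, y)$. Three ingredients drive the argument: a scalar consequence of flatness, a symmetry of compositions forced by flatness, and positive definiteness of $\langle\cdot,\cdot\rangle$ on $W$ used in two essential places.

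First I would polarise $\beta \bullet \beta = 0$ to the form
$$\langle \beta(x_1, x_3), \beta(x_2, x_4) \rangle = \langle \beta(x_1, x_4), \beta(x_2, x_3) \rangle, \qquad x_i \in V,$$
and specialise at $x_1 = x_2 = x$, $x_3 = x_4 = y$ to extract the key scalar identity
$$\|\beta(x, y)\|^2 = \langle \beta(x, x), \beta(y, y) \rangle.$$
Positive definiteness of the inner product on $W$ then yields the \emph{auxiliary principle}: if $\beta(x, x) = 0$, then $\beta(x, \cdot) \equiv 0$, so $x \in N(\beta)$.

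Next I would choose $v \in V$ maximising $r := \dim \beta_v(V)$ and set $U := \beta_v(V)$, $K := \ker \beta_v$. The polarised flatness identity is equivalent to the symmetry of $\beta_v^T \beta_u : V \to V$ for every $u \in V$ (where $\beta_v^T$ is the adjoint with respect to the inner products on $V$ and $W$); since $\mathrm{Im}(\beta_v^T \beta_u) \subseteq \mathrm{Im}(\beta_v^T) = K^\perp$, a short linear-algebra argument shows that symmetry forces $K \subseteq \ker(\beta_v^T \beta_u)$, and hence $\beta(u, y) \in (\mathrm{Im}\,\beta_v)^\perp = U^\perp$ for all $u \in V$ and $y \in K$. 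Thus $\beta(K, V) \subseteq U^\perp$.

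Finally, for fixed $x \in K$ I would exploit the maximality of $r$ via the perturbation $v + tx$ with $t \neq 0$. Since the images of $\beta_v$ and $\beta_x$ lie in $U$ and $U^\perp$ respectively, and positive definiteness forces $U \cap U^\perp = 0$, we have
$$\ker(\beta_v + t\beta_x) = \ker \beta_v \cap \ker \beta_x = K \cap \ker \beta_x.$$
Maximality of $r$ then forces $K \cap \ker \beta_x = K$, i.e.\ $K \subseteq \ker \beta_x$; taking $y = x$ in this inclusion yields $\beta(x, x) = 0$, and the auxiliary principle gives $x \in N(\beta)$. Hence $K \subseteq N(\beta)$, and rank–nullity for $\beta_v$ delivers $\dim N(\beta) \geq \dim K = \dim V - r \geq \dim V - \dim W$. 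The main technical hurdle is the perturbation step, where positive definiteness of $W$ enters twice — in the identification $U \cap U^\perp = 0$ and in the auxiliary principle — and precisely for this reason the conclusion of the proposition fails for indefinite inner products on $W$.
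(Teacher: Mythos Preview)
Your argument is correct. The paper itself does not prove this proposition but merely cites the references \cite{Mo2,Mo3,Da}, and what you have written is precisely the classical Moore--Cartan argument found there: pick a regular element $v$ (one maximising $\dim\beta_v(V)$), show $\beta(V,\ker\beta_v)\subseteq U^\perp$, use the direct-sum decomposition $U\oplus U^\perp$ together with maximality to force $\ker\beta_v\subseteq\ker\beta_x$ for every $x\in\ker\beta_v$, and finish with the isotropy principle $\beta(x,x)=0\Rightarrow x\in N(\beta)$.

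One cosmetic remark: your derivation of $\beta(V,K)\subseteq U^\perp$ via the self-adjointness of $\beta_v^T\beta_u$ is correct but slightly roundabout. The standard route is a one-line computation directly from flatness: for $y\in K$ and arbitrary $x,z\in V$,
\[
\langle \beta(x,y),\beta(v,z)\rangle=\langle \beta(x,z),\beta(v,y)\rangle=0,
\]
so $\beta(x,y)\perp U$. This avoids introducing adjoints altogether, but of course leads to the same conclusion.
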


\begin{proposition}
 Let $\beta \in {\mathrm {Hom}}(V\times V, W)$ be a flat bilinear form with respect to a Lorentzian inner product of $W$. Suppose
 that ${\mathrm {dim}}V>{\mathrm {dim}}W$ and $\beta (x,x)\neq 0$ for all $x \in V, x\neq 0$. Then there exist a non-zero isotropic 
vector $e \in W$, and a real valued bilinear form $\phi \in {\mathrm {Hom}}(V\times V, \mathbb{R})$ such that 
\begin{equation*}
 {\mathrm {dim}}N(\beta-\phi e)\geq {\mathrm {dim}}V-{\mathrm {dim}}W+2.
\end{equation*}
\end{proposition}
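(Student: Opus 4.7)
The plan is to reduce Proposition~2.4 to the positive-definite case handled by Proposition~2.3 by selecting a null vector $e\in W$ with respect to which $\beta$ (or a translate $\beta-\phi e$) has image in a subspace that projects onto a positive-definite quotient.

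Set $n=\dim V$, $m=\dim W$, and let $U\subseteq W$ be the linear span of $\mathrm{Im}\,\beta$. Since $W$ has signature $(m-1,1)$, the induced inner product on $U$ is either positive definite, Lorentzian, or degenerate with a one-dimensional null radical. If $U^\perp$ contains a non-zero null vector $e$---which happens whenever $U$ is degenerate (take $e\in U\cap U^\perp$) or whenever $U$ is positive definite of dimension at most $m-2$ (so $U^\perp$ is Lorentzian of dimension at least $2$ and hence contains null vectors)---then $\mathrm{Im}\,\beta\subseteq e^\perp$, and the quotient $e^\perp/\mathbb{R} e$ carries a positive definite inner product of dimension $m-2$. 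The projection $\bar\beta\colon V\times V\to e^\perp/\mathbb{R} e$ inherits flatness from $\beta$, so Proposition~2.3 yields $\dim N(\bar\beta)\geq n-m+2$; for $x\in N(\bar\beta)$ one has $\beta(x,y)\in\mathbb{R} e$, that is, $\beta(x,y)=\phi(x,y)e$ for a scalar form $\phi$, and hence $N(\beta-\phi e)\supseteq N(\bar\beta)$, giving the desired bound.

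The remaining configurations are those in which $U^\perp$ contains no non-zero null vector, namely $U$ positive definite of codimension one, or $U$ Lorentzian of dimension between $2$ and $m$. In those cases no null direction lies in $U^\perp$, so $\beta$ cannot be projected directly into the orthogonal complement of a null line; one must instead \emph{modify} $\beta$ by a well-chosen $\phi e$. To this end, fix a null frame $(e_+,e_-)$ of a Lorentzian plane in $W$, set $W_0=\mathrm{span}(e_+,e_-)^\perp$ (positive definite of dimension $m-2$), and decompose
\begin{equation*}
\beta=\beta_0+\psi\,e_++\phi\,e_-,
\end{equation*}
with $\beta_0\in\mathrm{Hom}(V\times V,W_0)$ and $\phi,\psi$ real-valued symmetric bilinear forms. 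A direct computation converts the flatness identity $\beta\bullet\beta=0$ into
\begin{equation*}
\beta_0\bullet\beta_0+2\,\psi\bullet\phi=0.
\end{equation*}
The goal is to exhibit a null frame for which the correction $\phi$ vanishes identically; then $\beta_0$ becomes flat with positive-definite target of dimension $m-2$, Proposition~2.3 gives $\dim N(\beta_0)\geq n-m+2$, and on $N(\beta_0)$ one has $\beta(x,y)=\psi(x,y)e_+$, so that $N(\beta-\psi e_+)\supseteq N(\beta_0)$, with $e=e_+$ null and non-zero.

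The main obstacle is the construction of such a frame in the two remaining configurations. The crucial algebraic input is the hypothesis $\beta(x,x)\neq 0$ for $x\neq 0$: combined with the flatness identity, it forces the quartic $x\mapsto\langle\beta(x,x),\beta(x,x)\rangle$ to be non-vanishing on $V\setminus\{0\}$, and a continuity and compactness argument on the projective null cone of $W$ then produces a distinguished null direction $e$ for which a scalar correction $\phi e$ makes the image of $\beta-\phi e$ degenerate, reducing the problem to the first case treated above. It is precisely this step in which the hypothesis $\beta(x,x)\neq 0$ is indispensable, which is also the reason the proposition admits counterexamples when that hypothesis is dropped.
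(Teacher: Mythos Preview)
The paper does not supply a proof of this proposition; it is quoted from Moore \cite{Mo2,Mo3} and Dajczer \cite{Da}, so there is no in-paper argument to compare against.

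Your first reduction is fine: when the span $U$ of $\mathrm{Im}\,\beta$ has a non-zero null vector $e$ in $U^\perp$, projecting to $e^\perp/\mathbb{R}e$ and invoking the positive-definite proposition gives exactly the desired bound. The problem is that the remaining case --- $U$ Lorentzian, or $U$ positive definite of codimension one --- is not actually proved.

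First, the goal you set, finding a null frame $(e_+,e_-)$ for which $\phi\equiv 0$, is impossible in precisely these cases. With $\beta=\beta_0+\psi\,e_++\phi\,e_-$ and $\beta_0$ taking values in $W_0=\mathrm{span}(e_+,e_-)^\perp$, one has $\phi(x,y)=\langle\beta(x,y),e_+\rangle/\langle e_-,e_+\rangle$, so $\phi\equiv 0$ if and only if $\mathrm{Im}\,\beta\subseteq e_+^\perp$, i.e.\ $e_+\in U^\perp$. You are exactly in the situation where $U^\perp$ contains no null vector, so no such frame exists.

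Second, the fallback you describe --- a ``continuity and compactness argument on the projective null cone'' producing $e$ and $\phi$ with $\mathrm{Im}(\beta-\phi e)$ degenerate --- is not carried out, and the supporting claim that $\langle\beta(x,x),\beta(x,x)\rangle\neq 0$ for all $x\neq 0$ does not follow from flatness together with $\beta(x,x)\neq 0$: in Lorentzian signature $\beta(x,x)$ can perfectly well be a non-zero null vector, and flatness places no constraint on $\langle\beta(x,x),\beta(x,x)\rangle$ by itself. So the central step is missing, and the heuristic offered for it rests on an unjustified assertion.

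The proofs in the cited references do not proceed via a signature dichotomy on $U$. They instead fix a \emph{regular element} $x_0\in V$ (one with $\dim\beta(x_0,V)$ maximal), analyze the linear map $y\mapsto\beta(x_0,y)$ and its kernel, and extract the null direction $e$ and the scalar form $\phi$ from that structure. Some argument of that kind is needed to complete your proof.
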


The following lemma is needed for the proof of the algebraic auxiliary results.

\begin{lemma}
Let  $\beta \in {\mathrm {Hom}}(V\times V, W)$ be a bilinear form, where $V$ and $W$ are both equipped with positive definite inner products
 and ${\mathrm {dim}}W\leq {\mathrm {dim}}V -2$. If $\beta \bullet \beta = k\langle.,.\rangle \bullet \langle.,.\rangle $ for some $k\neq0$,
 then $k>0$ and there exist a unit vector $\xi \in W$ and a subspace $V_{1}\subseteq V$  such that 
 $${\mathrm {dim}}V_{1}\geq {\mathrm {dim}}V-{\mathrm {dim}}W+1$$
 and 
$$\beta(x,y)=\sqrt{k}\langle x,y\rangle \xi {\ }{\ }\text{for
 all x} \in V {\ }{\mbox{and}} {\ } y \in V_{1}.$$ 
\end{lemma}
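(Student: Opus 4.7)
The plan is to convert the Gauss-type identity $\beta\bullet\beta = k\,\langle\cdot,\cdot\rangle\bullet\langle\cdot,\cdot\rangle$ into genuine flatness of an auxiliary bilinear form on a one-dimensional enlargement of $W$ carrying an appropriately chosen signature, and then feed the result to Propositions 2.1 and 2.2.

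Concretely, I enlarge $W$ to $\tilde W := W\oplus\mathbb{R}\tau$, extending the inner product by declaring $\tau\perp W$ with $\langle\tau,\tau\rangle = -\mathrm{sign}(k)$, and set
\begin{equation*}
\tilde\beta(x,y) := \beta(x,y) + \sqrt{|k|}\,\langle x,y\rangle\,\tau.
\end{equation*}
A direct expansion yields $\tilde\beta\bullet\tilde\beta = \beta\bullet\beta + |k|\,\langle\tau,\tau\rangle\,\langle\cdot,\cdot\rangle\bullet\langle\cdot,\cdot\rangle$; the sign of $\langle\tau,\tau\rangle$ has been chosen precisely so that this equals $\beta\bullet\beta - k\,\langle\cdot,\cdot\rangle\bullet\langle\cdot,\cdot\rangle = 0$. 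Hence $\tilde\beta$ is flat in $\tilde W$, and $\dim\tilde W = \dim W+1 \leq \dim V-1$.

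If $k<0$ then $\tilde W$ is positive definite, so Proposition 2.1 yields $\dim N(\tilde\beta)\geq\dim V-\dim\tilde W\geq 1$. But any $x\in N(\tilde\beta)$ makes the $\tau$-component of $\tilde\beta(x,y)$ vanish for every $y\in V$, giving $\langle x,y\rangle=0$ and thus $x=0$. This contradiction forces $k>0$. With $k>0$, the extension $\tilde W$ is Lorentzian, $\dim V\geq\dim W+2>\dim\tilde W$, and for $x\neq 0$ the $\tau$-component of $\tilde\beta(x,x)$ equals $\sqrt{k}\,\|x\|^2\neq 0$, so $\tilde\beta(x,x)\neq 0$. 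Proposition 2.2 then supplies a non-zero isotropic vector $\eta\in\tilde W$ and $\phi\in{\mathrm{Hom}}(V\times V,\mathbb{R})$ such that, setting $V_1:=N(\tilde\beta-\phi\,\eta)$,
\begin{equation*}
\dim V_1 \geq \dim V-\dim\tilde W+2 = \dim V-\dim W+1.
\end{equation*}

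Writing $\eta = \eta_0+t\tau$ with $\eta_0\in W$, isotropy $\langle\eta,\eta\rangle=0$ gives $\|\eta_0\|^2 = t^2$, and $t\neq 0$ since $\eta\neq 0$. For $x\in V$ and $y\in V_1$, matching the $W$- and $\tau$-components of the identity $\tilde\beta(x,y)=\phi(x,y)\eta$ yields $\beta(x,y) = \phi(x,y)\eta_0$ together with $\sqrt{k}\langle x,y\rangle = \phi(x,y)\,t$; eliminating $\phi(x,y) = \sqrt{k}\langle x,y\rangle/t$ produces $\beta(x,y) = \sqrt{k}\langle x,y\rangle\,\xi$ with $\xi := \eta_0/t$, and $\|\xi\|^2 = \|\eta_0\|^2/t^2 = 1$. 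The step I expect to require the most care is calibrating the signature and scaling of the extension so that the extra Kulkarni--Nomizu terms in $\tilde\beta\bullet\tilde\beta$ cancel rather than reinforce those coming from the hypothesis: the very same construction then lands in the positive-definite regime of Proposition 2.1 when $k<0$ (forcing a contradiction) and in the Lorentzian regime of Proposition 2.2 when $k>0$ (delivering the unit vector $\xi$), so once the setup is right the remainder is essentially bookkeeping.
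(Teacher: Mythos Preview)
Your proof is correct and follows essentially the same strategy as the paper's: both extend $\beta$ to a flat bilinear form on a one-dimensional enlargement of $W$ whose signature depends on the sign of $k$, then invoke Proposition~2.1 to rule out $k<0$ and Proposition~2.2 to extract $\xi$ and $V_1$ when $k>0$. The only cosmetic difference is that you absorb $\sqrt{|k|}$ into the definition of $\tilde\beta$ and normalize $\langle\tau,\tau\rangle=\pm1$, whereas the paper leaves the second component of $\tilde\beta$ as $\langle x,y\rangle$ and instead places the factor $-k$ in the inner product on $W\oplus\mathbb{R}$; the two constructions are equivalent.
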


\begin{proof}
From $\beta \bullet \beta = k\langle.,.\rangle \bullet \langle.,.\rangle $, we easily deduce that the bilinear form
 $\widetilde{\beta} \in {\mathrm {Hom}}(V\times V, W\oplus\mathbb{R})$ given by 
\begin{eqnarray*}
\widetilde{\beta}(x,y):=\big( {\beta}(x,y),   \langle x,y \rangle     \big), {\ }{\ }x,y \in V,
\end{eqnarray*}
is flat with respect to the inner product $\langle\langle .,. \rangle\rangle$ of $W\oplus\mathbb{R}$ defined by 
\begin{eqnarray*}
\langle\langle (\xi,t), (\eta,s)\rangle\rangle:= \langle \xi,\eta \rangle-kst, {\ }{\ }(\xi,t), (\eta,s) \in W\oplus\mathbb{R}.
\end{eqnarray*}
We claim that $k>0$. Arguing indirectly, we suppose that $k<0$. Then $\langle\langle .,. \rangle\rangle$ 
is positive definite and according to Proposition 2.1, we obtain 
\begin{equation*}
 {\mathrm {dim}}N(\widetilde{\beta})\geq {\mathrm {dim}}V-{\mathrm {dim}}(W\oplus\mathbb{R}),
\end{equation*}
which contradicts our assumption on the dimensions.

Thus $k>0$, the inner product $\langle\langle .,. \rangle\rangle$ has Lorentzian signature and $\widetilde{\beta}$ fulfills the assumptions
 of Proposition 2.2. 
Hence there exist a non-zero isotropic vector $\eta=(e,t) \in W\oplus \mathbb{R}$ and a symmetric bilinear form
 $\phi\in {\mathrm {Hom}}(V\times V, \mathbb{R})$  such that
\begin{equation*}
 {\mathrm {dim}}N(\widetilde{\beta}-\phi \eta)\geq {\mathrm {dim}}V-{\mathrm {dim}}W+1.
\end{equation*}
Setting $V_1:=N(\widetilde{\beta}-\phi \eta)$, we immediately see that $\langle x,y\rangle=t\phi(x,y)$ and $\beta(x,y)=\phi(x,y)e$ 
for all $x \in V {\ }{\mbox{and}} {\ } y \in V_{1}$. Using the fact that $\eta$ is isotropic, we obviously obtain 
$\beta(x,y)=\sqrt{k}\langle x,y\rangle \xi$ for all 
$x \in V {\ }{\mbox{and}} {\ } y \in V_{1}$, where $\xi=\pm e/|e|$. 
\end{proof}

Hereafter, $V,W$ will be
 real vector spaces of dimensions $n$ and $p$ respectively, both equipped with positive definite inner products.
For each $\beta \in {\mathrm {Hom}}(V\times V, W)$, we define the map 
$$\beta^{\sharp}:W\longrightarrow {\mathrm {End}}(V), \; \xi\longmapsto \beta^{\sharp}(\xi)$$ 
such that 
$$\langle \beta^{\sharp}(\xi)x,y\rangle = \langle \beta(x,y),\xi \rangle, \; \text{for all} \; x,y \in V.$$
 Here, ${\mathrm {End}}(V)$ denotes the set of all selfadjoint endomorphisms of $(V, \langle,\rangle)$.

When $2\leq p\leq n/2$, for each $\beta \in {\mathrm {Hom}}(V\times V, W)$, we denote by $\Omega (\beta)$ the 
 following subset of the unit $(p-1)$-sphere $S^{p-1}$ in $W$
\begin{eqnarray*}
\Omega (\beta):= \big\lbrace u \in S^{p-1}: p\leq {\mbox {Index}} ( \beta^{\sharp}(u) ) \leq n-p  \big \rbrace.
\end{eqnarray*}

Moreover, we define the ``scalar curvature'' function ${\mathrm {sc}}:{\mathrm {Hom}}(V\times V, W)\longrightarrow \mathbb{R}$ by 
\begin{eqnarray*}
{\mathrm {sc}}(\beta):=\dfrac{1}{2}\sum_{i,j=1}^{n}\beta \bullet \beta (e_{i},e_{j},e_{i},e_{j}), 
\end{eqnarray*}
where $\lbrace e_{1},...,e_{n}\rbrace$ is an arbitrary orthonormal basis of $V$.

\smallskip
We now may state the auxiliary results that are crucial for the proofs of the main results.

\begin{proposition}
Given positive integers $2\leq p\leq n/2 $ and numbers $k \in \mathbb{R}, \delta >0$, there exists a constant $c=c(n,p,k,\delta)>0$,
 such that  the following inequality holds 
\begin{equation*}
\big\Vert  \beta \bullet \beta -k \langle.,.\rangle \bullet \langle.,.\rangle  \big\Vert ^{2}  \geq c(n,p,k,\delta)
 \Big (\int_{\Omega_k (\beta)} \vert \det \beta^{\sharp}(u)\vert dS_{u}\Big )^{4/n},
\end{equation*}
for any $\beta \in {\mathrm {Hom}}(V\times V, W)$ with $|\mathrm{sc}(\beta)|\geq \delta^2\Vert\beta\Vert^{2}$, where
$$
\Omega_k (\beta):=\left\{\begin{array}{l}\Omega (\beta)\;\;\;\;\;\;\;\;\mbox{if}\;\;k>0,
\vspace*{1.5ex}\\
S^{p-1}\;\;\;\;\;\;\;\; \mbox{if}\;\;k\leq 0.
\end{array} \right.
$$
\end{proposition}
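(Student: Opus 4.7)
The plan is to argue by contradiction and compactness. If the conclusion failed for every $c > 0$, there would exist $\beta_j \in \mathrm{Hom}(V\times V, W)$ with $|\mathrm{sc}(\beta_j)| \geq \delta^2 \|\beta_j\|^2$ and
\[\big\|\beta_j \bullet \beta_j - k \langle \cdot, \cdot \rangle \bullet \langle \cdot, \cdot \rangle\big\|^2 < \frac{1}{j} \Big(\int_{\Omega_k(\beta_j)} |\det \beta_j^\sharp(u)| dS_u\Big)^{4/n}.\]
Set $\tilde\beta_j := \beta_j/\|\beta_j\|$ and $M_j := \|\beta_j\|$. Since $\Omega_k(t\beta) = \Omega_k(\beta)$ for $t > 0$ and $|\det (t\beta)^\sharp| = t^n |\det \beta^\sharp|$, dividing by $M_j^4$ transforms the inequality into
\[\Big\|\tilde\beta_j \bullet \tilde\beta_j - \tfrac{k}{M_j^2}\langle \cdot, \cdot \rangle \bullet \langle \cdot, \cdot \rangle\Big\|^2 < \frac{1}{j}\Big(\int_{\Omega_k(\tilde\beta_j)} |\det \tilde\beta_j^\sharp(u)| dS_u\Big)^{4/n} \leq \frac{C}{j},\]
where the last bound uses $\|\tilde\beta_j\| = 1$. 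After extracting subsequences, $\tilde\beta_j \to \tilde\beta_\infty$ with $\|\tilde\beta_\infty\| = 1$, $|\mathrm{sc}(\tilde\beta_\infty)| \geq \delta^2$, and $M_j \to M \in [0, \infty]$.

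Most cases yield immediate contradictions. If $M = 0$ and $k \neq 0$, the left-hand side blows up, violating the $C/j$ bound; if $M = \infty$ or $k = 0$, then $\tilde\beta_\infty \bullet \tilde\beta_\infty = 0$, forcing $\mathrm{sc}(\tilde\beta_\infty) = 0$ and contradicting $|\mathrm{sc}(\tilde\beta_\infty)| \geq \delta^2 > 0$; if $M \in (0, \infty)$ and $k < 0$, the limit identity $\tilde\beta_\infty \bullet \tilde\beta_\infty = (k/M^2) \langle \cdot, \cdot \rangle \bullet \langle \cdot, \cdot \rangle$ with negative coefficient is forbidden by Lemma 2.1, which applies since $p \leq n/2 \leq n - 2$ (as $n \geq 4$). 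The one delicate case is $k > 0$ with $M \in (0, \infty)$, where Lemma 2.1 produces a unit $\xi \in W$ and a subspace $V_1 \subseteq V$ with $\dim V_1 \geq n - p + 1$ such that $\tilde\beta_\infty(x, y) = \sqrt{k/M^2}\,\langle x, y \rangle\, \xi$ for every $x \in V$ and $y \in V_1$.

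In this remaining case, $\tilde\beta_\infty^\sharp(u)$ leaves $V_1$ invariant with $V_1$-block equal to $\sqrt{k/M^2}\,\langle u, \xi\rangle I$; an eigenvalue count then gives $\mathrm{Index}(\tilde\beta_\infty^\sharp(u)) \leq p - 1$ when $\langle u, \xi\rangle \geq 0$ and $\geq n - p + 1$ when $\langle u, \xi\rangle < 0$, so $\Omega(\tilde\beta_\infty) = \emptyset$. By openness of the strict index condition on $\{|\langle u, \xi\rangle| \geq \epsilon\}$, there is $\epsilon_j \to 0$ with $\Omega(\tilde\beta_j) \subseteq \{|\langle u, \xi\rangle| < \epsilon_j\}$. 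On this shrinking equatorial band the $V_1$-block eigenvalues of $\tilde\beta_j^\sharp(u)$ are of order $\epsilon_j$, yielding $|\det \tilde\beta_j^\sharp(u)| \leq C\epsilon_j^{n - p + 1}$ and hence $\int_{\Omega(\tilde\beta_j)} |\det \tilde\beta_j^\sharp(u)| dS_u \leq C\epsilon_j^{n - p + 2}$. Paired with a matching lower bound $\|\tilde\beta_j \bullet \tilde\beta_j - (k/M_j^2) \langle \cdot, \cdot \rangle \bullet \langle \cdot, \cdot \rangle\|^2 \geq c\epsilon_j^2$, extracted from the identity $\tilde\beta_j \bullet \tilde\beta_j - \tilde\beta_\infty \bullet \tilde\beta_\infty = (\tilde\beta_j + \tilde\beta_\infty) \bullet (\tilde\beta_j - \tilde\beta_\infty)$ applied to the components of the perturbation responsible for the nonempty $\Omega(\tilde\beta_j)$, and the elementary inequality $4(n - p + 2)/n > 2$ guaranteed by $p \leq n/2$, these estimates contradict the assumed violating inequality for large $j$. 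The main obstacle is precisely this last step: one must identify which components of $\tilde\beta_j - \tilde\beta_\infty$ can generate a nonempty $\Omega(\tilde\beta_j)$ and verify, through the linear map $\Delta \mapsto \tilde\beta_\infty \bullet \Delta$, that those components force the matching lower bound, excluding degenerate perturbations lying in its kernel.
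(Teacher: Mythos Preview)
Your normalization, compactness extraction, and disposal of the cases $M=0$ with $k\neq 0$, $M=\infty$, $k=0$, and $M\in(0,\infty)$ with $k<0$ are all correct, and they mirror the paper's argument (its Case~1 is your $M=\infty$; its Case~2 for $k=0$ and the appeal to Lemma~2.1 for $k<0$ proceed exactly as you do). The real content is the remaining case $k>0$, $M\in(0,\infty)$, and here your argument is genuinely incomplete --- by your own admission, but the gap is more serious than you indicate.

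The lower bound $\|\tilde\beta_j\bullet\tilde\beta_j-(k/M_j^2)\langle\cdot,\cdot\rangle\bullet\langle\cdot,\cdot\rangle\|^2\ge c\,\epsilon_j^2$ cannot be obtained from the linearization $\Delta\mapsto \tilde\beta_\infty\bullet\Delta$, because that map has a large kernel and perturbations in the kernel \emph{do} produce a nonempty $\Omega(\tilde\beta_j)$. Concretely, take $\tilde\beta_\infty=c\langle\cdot,\cdot\rangle\,\xi$ on all of $V$ (a case permitted by Lemma~2.1, with $V_1=V$); then every symmetric $\Delta$ with values in $\xi^\perp$ satisfies $\tilde\beta_\infty\bullet\Delta=0$, yet for $u\perp\xi$ one has $(\tilde\beta_\infty+\Delta)^\sharp(u)=\Delta^\sharp(u)$, which can be arranged to have index in $[p,n-p]$. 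For such $\Delta_j$ the left side drops to $\|\Delta_j\bullet\Delta_j\|^2=O(\delta_j^{4})$, and since $4(n-p+2)/n<4$ as soon as $p>2$, your exponent comparison $4(n-p+2)/n>2$ no longer closes the argument. Salvaging the approach would require matching the order of vanishing of $\varphi$ to that of $\psi$ separately along every direction of degeneration of $\Delta_j$, which is substantially more than what you outline.

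The paper bypasses this quantitative balancing entirely. After establishing (as you do) that every accumulation point of a sequence $u_m\in\Omega(\beta_m)$ with $\det\beta_m^\sharp(u_m)\neq 0$ must lie in $\xi^\perp$, it exploits the \emph{openness} of the set of such $u_m$ in $S^{p-1}$ to select, for each $m$, a family $u_m^{(1)},\dots,u_m^{(p)}$ inside it that spans $W$. From the structure of $\beta$ on $V\times V_1$ and the orthogonality of each limit $u^{(\alpha)}$ to $\xi$, the paper argues that $(\beta_m\bullet\beta_m)|_{V_1\times V_1\times V_1\times V_1}\to 0$; combined with $\varphi_k(\beta_m)\to 0$ this forces $k\,(\langle\cdot,\cdot\rangle\bullet\langle\cdot,\cdot\rangle)|_{V_1^4}=0$, contradicting $k>0$ and $\dim V_1\ge 2$. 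The route is qualitative and needs no comparison of rates.
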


\begin{proof}
 We consider the functions $\varphi_{k},\psi_k:{\mathrm {Hom}}(V\times V, W)\longrightarrow \mathbb{R}$ defined by
 \begin{equation*}
\varphi_{k}(\beta):=\big\Vert  \beta \bullet \beta -k \langle.,.\rangle \bullet \langle.,.\rangle  \big\Vert ^{2}, {\ }{\ }   \psi_k(\beta):
=\int_{\Omega_k (\beta)} \vert \det \beta^{\sharp}(u)\vert dS_{u},
 \end{equation*}
where $\beta \in {\mathrm {Hom}}(V\times V, W)$. Moreover, we define the function $\omega_{k}:U_{k,\delta} \longrightarrow \mathbb{R}$ by 
\begin{equation*}
  \omega_{k}(\beta):=\dfrac{\varphi_{k}(\beta)}{\big(\psi_k(\beta)\big)^{4/n}},{\ } \beta \in U_{k,\delta}, 
\end{equation*}
where 
$$
U_{k,\delta}:=\left\{\begin{array}{l}\Big\lbrace \beta \in {\mathrm {Hom}}(V\times V, W): \psi_k(\beta)\neq 0 {\ }{\ }{\mbox{and}}
 {\ }{\ } |\mathrm{sc}(\beta)|\geq \delta^2\Vert\beta\Vert^{2} \Big\rbrace\;\;\;\;\;\mbox{if}\;\;k\neq0,
\vspace*{1.5ex}\\
\Big\lbrace \beta \in {\mathrm {Hom}}(V\times V, W): \psi_k(\beta)=1 {\ }{\ }{\mbox{and}}
 {\ }{\ } |\mathrm{sc}(\beta)|\geq \delta^2\Vert\beta\Vert^{2} \Big\rbrace\;\;\;\;\; \mbox{if}\;\;k=0.
\end{array} \right.
$$

We shall prove that $\inf \omega_{k}(U_{k,\delta})>0$. Arguing indirectly, we assume that
there is  a sequence $\{\beta_m\}$  in $U_{k,\delta}$ such that 
\begin{equation}
\lim _{m\rightarrow \infty}\omega_{k}(\beta_{m})=0.
\end{equation}
We note that $\beta_{m}\neq 0$ for all $m \in \mathbb{N}$, since $\beta_{m} \in U_{k,\delta}$. 
Then we may write $\beta_{m}=\Vert\beta_{m}\Vert \widehat{\beta}_{m}$, where $ \Vert \widehat{\beta}_{m} \Vert =1$.

To reach a contradiction, we distinguish two cases.

\smallskip

\textit{Case 1}. Suppose that the sequence $\{\beta_m\}$ is unbounded. We may assume, by taking a subsequence if necessary,
 that $\lim _{m\rightarrow \infty}\Vert\beta_{m}\Vert =+\infty$.
 Since $ \Vert \widehat{\beta}_{m} \Vert =1$, we may also assume 
that $\{  \beta_m\}$ converges to some  $\widehat{\beta} \in {\mathrm {Hom}}(V\times V, W)$ 
with $\Vert \widehat{\beta} \Vert=1$.

Using  the fact that $\psi_k$ is homogeneous of degree $n$, (2.1) yields 
\begin{equation*}
\lim _{m\rightarrow \infty}\dfrac{\|\widehat{\beta}_m\bullet \widehat{\beta}_m-\frac{k}{\|\beta_{m}\|^2} \langle,\rangle\bullet  \langle,\rangle\|^2}
{(\psi_k(\widehat{\beta}_{m}))^{4/n}}=0.
\end{equation*} 
Since $\{\psi_k(\widehat{\beta}_{m}) \}$ is bounded, the above implies that  $\widehat{\beta}$ is flat. On the other hand, 
from ${\beta}_m \in U_{k,\delta}$ we have $|\mathrm{sc}(\widehat{\beta}_m)|\geq \delta^2$, and taking the limit for $m\rightarrow \infty$ we obtain 
$|\mathrm{sc}(\widehat{\beta})|\geq \delta^2$, contradiction.

\smallskip
\textit{Case 2}. Assume that the sequence $\{\beta_m\}$ is bounded. Then $\{\beta_m\}$  converges to some
 ${\beta} \in {\mathrm {Hom}}(V\times V, W)$, 
by taking a subsequence if necessary. 
From (2.1) it follows that $\varphi_{k}({\beta})=0$,
 or  equivalently ${\beta} \bullet {\beta} = k\langle.,.\rangle \bullet \langle.,.\rangle $. 

At first we assume that $k=0$. Then $\beta$ is flat and non-zero.  In fact, if ${\beta}=0$, then  
${\beta}^{\sharp}(u)=0$ for all $u \in S^{p-1}$. Since $\beta_{m} \in U_{0,\delta}$, there exists $\xi_{m} \in S^{p-1}$ such that 
\begin{equation}
 \vert \det {\beta}^{\sharp}_{m}(\xi_{m})\vert {\mbox {Vol}} (S^{p-1} )=1 {\ }{\ }{\mbox {for all}}{\ }{\ } m \in \mathbb{N}.
\end{equation} 
On account of $\vert\xi_{m}\vert=1$, we may assume that the sequence $\lbrace \xi_{m} \rbrace$ converges to some $\xi \in S^{p-1}$, 
by passing to a subsequence if necessary. Then, from $\lim _{m\rightarrow \infty}\beta_{m}={\beta}$, we get 
$\lim _{m\rightarrow \infty}{\beta}^{\sharp}_{m}(\xi_{m})={\beta}^{\sharp}(\xi)=0$, which contradicts (2.2). Thus $\beta \neq 0$. On the other hand, 
from ${\beta}_m \in U_{k,\delta}$ we have $|\mathrm{sc}({\beta}_m)|\geq \delta^2\|{\beta}_m\|^2$. Taking the limit for $m\rightarrow \infty$,  
we obtain 
$|\mathrm{sc}({\beta})|\geq \delta^2\|{\beta}\|^2$, contradiction since $\beta$ is flat.

Now assume that $k\neq 0$. According to Lemma 2.1, $k>0$ and there exist a unit vector $\xi \in W$ and a subspace $V_{1}\subseteq V$  such that 
 ${\mathrm {dim}}V_{1}\geq n-p+1$
 and 
\begin{eqnarray}
{\beta}(x,y)=\sqrt{k}\langle x,y\rangle \xi {\ }{\ }{\mbox {for all}}{\ }{\ } x \in V {\ }{\mbox{and}} {\ } y \in V_{1}. 
\end{eqnarray}

By virtue of the fact that $\beta_{m} \in U_{k,\delta}$, there exists an open subset $\mathcal{U}_{m}$ of $S^{p-1}$ such that 
$$\mathcal{U}_{m}\subseteq  \big\lbrace u \in S^{p-1}: p \leq {\mbox {Index}} ( \beta^{\sharp}_{m}(u)) \leq n-p  \big \rbrace$$
and 
$$\det \beta^{\sharp}_{m}(u)\neq0 \;\; \text{for all}\;\; u \in \mathcal{U}_{m} \;\;\text{and}\;\; m \in \mathbb{N}.$$

Let $\lbrace u_{m}\rbrace$ be a sequence such that $u_{m} \in \mathcal{U}_{m}$ for all $m \in \mathbb{N}$. On account of $\vert u_{m} \vert =1$,
 we may assume hereafter that $\lbrace u_{m}\rbrace$ is convergent, by passing if necessary to a subsequence. 
We set $u=\lim _{m\rightarrow \infty}u_{m}$. Since $\lim _{m\rightarrow \infty}\beta^{\sharp}_{m}(u_{m})={\beta}^{\sharp}(u)$ 
and $u_{m} \in \mathcal{U}_{m}$, we deduce that ${\mbox {Index}} ({\beta}^{\sharp}(u)) \leq n-p$. 
Then from (2.3) 
we get $\langle u,\xi\rangle  \geq 0.$ We claim that $\langle u,\xi\rangle  = 0$. Indeed, if $\langle u,\xi\rangle  > 0,$ then (2.3) 
implies that ${\beta}^{\sharp}(u)$ has at least $n-p+1$ positive eigenvalues, and so, 
for $m$ 
large enough, $\beta^{\sharp}_{m}(u_{m})$ has at least $n-p+1$ positive eigenvalues.
This, on account of the fact that $\det \beta^{\sharp}_{m}(u)\neq0$ for all $u \in \mathcal{U}_{m}$,  confirms that $\beta^{\sharp}_{m}(u_{m})$ has 
at most $p-1$ 
negative eigenvalues, that is, ${\mbox {Index}} (\beta^{\sharp}_{m}(u_{m})) \leq p-1$, contradiction, since $u_{m} \in \mathcal{U}_{m}$.

Thus, we have proved that for any convergent sequence $\lbrace u_{m}\rbrace$ such that $u_{m} \in \mathcal{U}_{m}$ for all $m \in \mathbb{N}$, 
we have $\langle \lim _{m\rightarrow \infty}u_{m}, \xi \rangle =0$.

Since $\mathcal{U}_{m}$ is open, we may choose convergent sequences $\{u^{(1)}_{m}\},\{u^{(2)}_{m}\},...,\{u^{(p)}_{m}\}$ 
such that ${u^{(1)}_{m}},{u^{(2)}_{m}},...,{u^{(p)}_{m}} \in \mathcal{U}_{m}$ and  span $W$ for all $m \in \mathbb{N}$. Then, 
by virtue of (2.3) and the fact that $\langle \lim _{m\rightarrow \infty}u^{(\alpha)}_{m}, \xi \rangle =0$ for 
all $\alpha \in \lbrace1,2,...,p\rbrace$, we infer that the restriction of $\beta_{m}$ to $V_{1}\times V_{1}$ 
satisfies 
$$\lim _{m\rightarrow \infty}\beta_{m}\big\vert_{V_{1}\times V_{1}}=0$$
and consequently
\begin{equation}
\lim _{m\rightarrow \infty}\big(\beta_{m}\bullet\beta_{m}\big)\Big\vert_{V_{1}\times V_{1}\times V_{1}\times V_{1}}=0. 
\end{equation} 
From the obvious inequality 
$$\Big\Vert\Big(\beta_{m}\bullet\beta_{m}-k\langle.,.\rangle\bullet\langle.,.\rangle\Big)\Big\vert_{V_{1}\times V_{1}\times 
V_{1}\times V_{1}}\Big\Vert^{2}\leq \varphi_{k}(\beta_{m}),$$
(2.4) and by virtue of $\lim _{m\rightarrow \infty}\varphi_{k}(\beta_{m})=\varphi_{k}({\beta})=0$, we reach a contradiction since $k>0$. 

Thus, we have proved that $\inf \omega_{k}(U_{k,\delta})>0$. 
 Obviously  $\inf \omega_{k}(U_{\delta})$ depends only on $n,p,k$ and $\delta$ and is denoted by $c(n,p,k,\delta)$. Then the 
desired inequality follows immediately.
\end{proof}

\begin{proposition}
Given positive integers $2\leq  p\leq n/2 $ and a number $\delta >0$, there exists a constant $c=c(n,p,\delta)>0$, 
such the following inequality holds 
\begin{equation*}
\Big\Vert  \beta \bullet \beta -\dfrac{{\mathrm {sc}}(\beta)}{n(n-1)} \langle.,.\rangle \bullet \langle.,.\rangle  \Big\Vert ^{2} 
 \geq c(n,p,\delta) \Big (\int_{\Lambda (\beta)} \vert \det \beta^{\sharp}(u)\vert dS_{u}\Big )^{4/n},
\end{equation*}
for any $\beta \in {\mathrm {Hom}}(V\times V, W)$ with 
$|{\mathrm {sc}}(\beta)|\geq \delta^{2} \Vert\beta\Vert^{2}$, where
$$
\Lambda (\beta):=\left\{\begin{array}{l}\Omega (\beta)\;\;\;\;\;\;\;\;\mbox{if}\;\;{\mathrm {sc}}(\beta)>0,
\vspace*{1.5ex}\\
S^{p-1}\;\;\;\;\;\;\; \;\mbox{if}\;\;{\mathrm {sc}}(\beta)\leq 0.
\end{array} \right.
$$
\end{proposition}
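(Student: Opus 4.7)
The plan is to adapt the compactness-and-contradiction scheme used for Proposition~2.3, with the essential novelty that the constant $k$ is replaced by the $\beta$-dependent quantity $\mathrm{sc}(\beta)/(n(n-1))$. Set
$$\Phi(\beta):=\Big\Vert \beta\bullet\beta-\frac{\mathrm{sc}(\beta)}{n(n-1)}\langle.,.\rangle\bullet\langle.,.\rangle\Big\Vert^{2},\qquad \psi(\beta):=\int_{\Lambda(\beta)}|\det\beta^{\sharp}(u)|\,dS_{u},$$
and define $\omega(\beta):=\Phi(\beta)/\psi(\beta)^{4/n}$ on $U_{\delta}:=\{\beta\in\mathrm{Hom}(V\times V,W):|\mathrm{sc}(\beta)|\geq\delta^{2}\Vert\beta\Vert^{2},\ \psi(\beta)\neq 0\}$. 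The first key observation, not available in Proposition~2.3, is that $\mathrm{sc}(t\beta)=t^{2}\mathrm{sc}(\beta)$, so $\Phi$ and $\psi^{4/n}$ are both homogeneous of degree $4$ in $\beta$, and therefore $\omega$ is \emph{scale-invariant}. Consequently the proof will not branch into bounded and unbounded cases as in Proposition~2.3.

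Arguing by contradiction, suppose there is a sequence $\{\beta_{m}\}\subset U_{\delta}$ with $\omega(\beta_{m})\to 0$. Normalize $\Vert\beta_{m}\Vert=1$ and extract a convergent subsequence $\beta_{m}\to\beta$ with $\Vert\beta\Vert=1$ and $|\mathrm{sc}(\beta)|\geq\delta^{2}$. Since $|\det\beta_{m}^{\sharp}(u)|$ is uniformly bounded on $S^{p-1}$, the function $\psi$ is uniformly bounded on $\{\Vert\beta\Vert=1\}$, hence $\omega(\beta_{m})\to 0$ forces $\Phi(\beta_{m})\to 0$. Since $\mathrm{sc}$ and $\Phi$ are continuous, passing to the limit yields
$$\beta\bullet\beta=\frac{\mathrm{sc}(\beta)}{n(n-1)}\,\langle.,.\rangle\bullet\langle.,.\rangle,\qquad \beta\neq 0.$$
After a further subsequence, I may assume $\mathrm{sc}(\beta_{m})$ has a fixed sign that agrees with that of $\mathrm{sc}(\beta)$.

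Now I split according to the sign of $\mathrm{sc}(\beta)$. If $\mathrm{sc}(\beta)\leq -\delta^{2}<0$, then the relation above has the form $\beta\bullet\beta=k\langle.,.\rangle\bullet\langle.,.\rangle$ with $k<0$, which directly contradicts Lemma~2.1. This disposes of the $\mathrm{sc}(\beta)\leq 0$ branch with no further work, and it is the point at which Lemma~2.1 does most of the lifting.

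If $\mathrm{sc}(\beta)\geq\delta^{2}>0$, set $k:=\mathrm{sc}(\beta)/(n(n-1))>0$; then $\mathrm{sc}(\beta_{m})>0$ and hence $\Lambda(\beta_{m})=\Omega(\beta_{m})$ for all $m$. Lemma~2.1 produces a unit vector $\xi\in W$ and a subspace $V_{1}\subseteq V$ with $\dim V_{1}\geq n-p+1$ such that $\beta(x,y)=\sqrt{k}\,\langle x,y\rangle\,\xi$ for all $x\in V$, $y\in V_{1}$. From $\psi(\beta_{m})\neq 0$ together with the fact that $\{u\in S^{p-1}:\det\beta_{m}^{\sharp}(u)\neq 0\}$ is open and, on account of continuous dependence of eigenvalues of $\beta_{m}^{\sharp}(u)$ on $u$, the condition $p\leq\mathrm{Index}\leq n-p$ is preserved locally, one obtains nonempty open subsets $\mathcal{U}_{m}\subseteq\Omega(\beta_{m})$ on which $\det\beta_{m}^{\sharp}$ is nonvanishing. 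From here the argument runs exactly as in the last two paragraphs of the proof of Proposition~2.3: any convergent sequence $u_{m}\in\mathcal{U}_{m}$ has a limit $u$ satisfying $\langle u,\xi\rangle=0$; choosing $p$ such sequences whose elements span $W$ for each $m$ and whose limits still span $W$ forces, via the structural formula for $\beta$, the restriction $\beta|_{V_{1}\times V_{1}}$ to vanish, contradicting $k>0$. The conclusion $\inf\omega(U_{\delta})>0$ then follows and gives the desired constant $c(n,p,\delta)$.

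The main obstacle I expect is the same as in Proposition~2.3: extracting $p$ convergent sequences from the open sets $\mathcal{U}_{m}$ whose limits continue to span $W$, so that orthogonality to $\xi$ of all limits really forces $\xi=0$. Once the spanning property is secured, the rest is essentially formal, since the new ingredient compared to Proposition~2.3, namely that $k$ varies with $\beta$, is absorbed by the continuity of $\mathrm{sc}$ at the limit $\beta$.
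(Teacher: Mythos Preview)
Your approach is correct and is essentially the paper's compactness-and-contradiction scheme, resting on Lemma~2.1 and the index analysis on the open sets $\mathcal{U}_m\subseteq\Omega(\beta_m)$. The one substantive difference is the normalization. The paper restricts to the slice $\psi(\beta)=1$, which does not control $\Vert\beta_m\Vert$; it is therefore forced to treat the bounded and unbounded cases separately, running the Lemma~2.1/$\mathcal{U}_m$ machinery twice. Your observation that $\omega=\Phi/\psi^{4/n}$ is homogeneous of degree zero (because $\mathrm{sc}(t\beta)=t^{2}\mathrm{sc}(\beta)$ makes both $\Phi$ and $\psi^{4/n}$ homogeneous of degree $4$) lets you normalize $\Vert\beta_m\Vert=1$ directly and pass to a convergent subsequence at once, so a single pass through the argument suffices; this genuinely shortens the proof. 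The remaining steps---ruling out $\mathrm{sc}(\beta)<0$ by the sign conclusion of Lemma~2.1, and in the positive case the orthogonality-to-$\xi$ analysis of limits drawn from the $\mathcal{U}_m$---coincide with the paper's, and the obstacle you flag is precisely the point the paper handles in the same way.
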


\begin{proof}
 We consider the functions $\varphi,\psi:{\mathrm {Hom}}(V\times V, W)\longrightarrow \mathbb{R}$ given by
 \begin{equation*}
\varphi(\beta):=\Big\Vert  \beta \bullet \beta -\dfrac{{\mathrm {sc}}(\beta)}{n(n-1)} \langle.,.\rangle \bullet \langle.,.\rangle 
 \Big\Vert ^{2}, {\ }{\ }   \psi(\beta):=\int_{\Lambda(\beta)} \vert \det \beta^{\sharp}(u)\vert dS_{u},
 \end{equation*}
where $\beta \in {\mathrm {Hom}}(V\times V, W)$. 

We shall prove that $\varphi$ attains a positive minimum on $U_{\delta}$,
where 
\begin{equation*}
U_{\delta}:=\Big\lbrace \beta \in {\mathrm {Hom}}(V\times V, W): \psi(\beta)=1 {\ }{\ }{\mbox{and}} {\ }{\ }|{\mathrm {sc}}(\beta)|\geq 
\delta^{2} \Vert\beta\Vert^{2}\Big \rbrace. 
\end{equation*}
There exists a sequence $ \lbrace \beta_{m} \rbrace$  in $U_{\delta}$ such that 
\begin{equation*}
\lim _{m\rightarrow \infty}\varphi(\beta_{m})=\inf \varphi (U_{\delta}).
\end{equation*}
We observe that $\beta_{m}\neq 0$ for all $m \in \mathbb{N}$, since  $\beta_{m} \in U_{\delta}$. 
Then we may write $\beta_{m}=\Vert\beta_{m}\Vert \widehat{\beta}_{m}$, where $ \Vert \widehat{\beta}_{m} \Vert =1$. 

We claim that the sequence $ \lbrace \beta_{m} \rbrace$ 
is bounded. Assume to the contrary that there exists a subsequence of $ \lbrace \beta_{m} \rbrace$, which by abuse
 of notation is denoted again by $ \lbrace \beta_{m} \rbrace$, such that $\lim _{m\rightarrow \infty}\Vert\beta_{m}\Vert =+\infty$.
 Since $ \Vert \widehat{\beta}_{m} \Vert =1$, we may assume, by taking a subsequence if necessary, 
that $\{\widehat
{\beta}_m \}$ converges to some  $\widehat{\beta} \in {\mathrm {Hom}}(V\times V, W)$ 
with $\Vert \widehat{\beta} \Vert=1$. Using the fact that $\psi$ is homogeneous of degree $n$ and since $ \lbrace \beta_{m} \rbrace \in U_{\delta}$, 
we get
\begin{equation*}
\Vert\beta_{m}\Vert=\dfrac{1}{(\psi(\widehat{\beta}_{m}))^{{1}/{n}}}.
\end{equation*} 
Thus $\lim _{m\rightarrow \infty}\psi(\widehat\beta_{m})=0$ and consequently  $\varphi(\widehat{\beta})=0$, or  equivalently 
$$\widehat{\beta} \bullet \widehat{\beta} = \dfrac{{\mathrm {sc}}(\widehat{\beta})}{n(n-1)}\langle.,.\rangle \bullet \langle.,.\rangle .$$
From $ \lbrace \beta_{m} \rbrace  \in U_{\delta}$, we have $|{\mathrm {sc}}(\widehat{\beta}_{m})|\geq \delta^{2}$. Taking the
 limit for $m\rightarrow\infty$, we get ${\mathrm {sc}}(\widehat{\beta})\neq0$. According to Lemma 2.1, ${\mathrm {sc}}(\widehat{\beta})>0$ 
and there exists a unit vector $\widehat{\xi} \in W$ and a subspace
 $\widehat{V}_{1}\subseteq V$ such that ${\mathrm {dim}}\widehat{V}_{1}\geq n-p+1$ and
\begin{eqnarray}
\widehat{\beta}(x,y)=\Big(\dfrac{{\mathrm {sc}}(\widehat{\beta})}{n(n-1)}\Big)^{{1}/{2}}\langle x,y\rangle \widehat{\xi}
 {\ }{\ }{\mbox {for all}}{\ }{\ } x \in V {\ }{\mbox{and}} {\ } y \in \widehat{V}_{1}. 
\end{eqnarray} 
Moreover, since $\beta_m \in U_{\delta},$  there exists an open subset $\widehat{\mathcal{U}}_{m}$ of $S^{p-1}$ such that
 $$\widehat{\mathcal{U}}_{m}\subseteq \Lambda (\widehat{\beta}_m)\;\; \text{and} \;\;\det \widehat{\beta}^{\sharp}_{m}(u)\neq0 
\;\;\text{for all} \;\;u \in \widehat{\mathcal{U}}_{m} \;\;\text{and} \;\;m \in \mathbb{N}.$$
 From ${\mathrm {sc}}(\widehat{\beta})>0$, we deduce that
 ${\mathrm {sc}}(\widehat{\beta}_m)>0$ and so
$$\widehat{\mathcal{U}}_{m}\subseteq  \big\lbrace u \in S^{p-1}: p\leq {\mbox {Index}} ( \widehat{\beta}^{\sharp}_{m}(u))
 \leq n-p  \big \rbrace$$
for $m$ large enough.

Let $\lbrace \widehat{u}_{m}\rbrace$ be a sequence such that $\widehat{u}_{m} \in \widehat{\mathcal{U}}_{m}$ for all $m \in \mathbb{N}$. 
On account of $\vert \widehat{u}_{m} \vert =1$, we may assume hereafter that $\lbrace \widehat{u}_{m}\rbrace$ is convergent, 
by passing if necessary to a subsequence. We set $\widehat{u}=\lim _{m\rightarrow \infty}\widehat{u}_{m}$. 
Since  $\lim _{m\rightarrow \infty}\widehat{\beta}^{\sharp}_{m}(\widehat{u}_{m})={\widehat{\beta}}^{\sharp}(\widehat{u})$ and 
  $\widehat{u}_{m} \in \widehat{\mathcal{U}}_{m}$, we deduce that
 ${\mbox {Index}} ( \widehat{\beta}^{\sharp}(\widehat{u}) ) \leq n-p$. Then, from (2.5)  we obtain $\langle \widehat{u},\widehat{\xi}\rangle  \geq 0.$ 
We claim that $\langle \widehat{u},\widehat{\xi}\rangle  = 0.$ Indeed, if $\langle \widehat{u},\widehat{\xi}\rangle  > 0,$ 
then (2.5) implies that $\widehat{\beta}^{\sharp}(\widehat{u})$ 
has at least $n-p+1$ 
positive eigenvalues, and so, for $m$ large enough, $\beta^{\sharp}_{m}(u_{m})$ has at least $n-p+1$ positive eigenvalues.
This, on account of the fact that $\det \widehat{\beta}^{\sharp}_{m}(u)\neq0$ for all $u \in \widehat{\mathcal{U}}_{m}$, confirms
 that $\widehat{\beta}^{\sharp}_{m}(\widehat{u}_{m})$ has at most $p-1$ negative eigenvalues, that is, ${\mbox {Index}}
 (\widehat{\beta}^{\sharp}_{m}(\widehat{u}_{m})) \leq p-1$, contradiction, since $\widehat{u}_{m} \in \widehat{\mathcal{U}}_{m}$.

Thus, we have proved that for any convergent sequence $\lbrace \widehat{u}_{m}\rbrace$ such that $\widehat{u}_{m} \in \widehat{\mathcal{U}}_{m}$ for
 all $m \in \mathbb{N}$, we have $\langle \lim _{m\rightarrow \infty}\widehat{u}_{m}, \widehat{\xi} \rangle =0$.

Since $\widehat{\mathcal{U}}_{m}$ is open, we may choose convergent sequences $\{\widehat{u}^{(1)}_{m}\},\{\widehat{u}^{(2)}_{m}\},...,
\{\widehat{u}^{(p)}_{m}\}$ such that ${\widehat{u}^{(1)}_{m}},{\widehat{u}^{(2)}_{m}},...,{\widehat{u}^{(p)}_{m}} \in \widehat{\mathcal{U}}_{m}$ 
and  span $W$ for all $m \in \mathbb{N}$. Then, by virtue of (2.5) and the fact that $\langle \lim _{m\rightarrow \infty}\widehat{u}^{(\alpha)}_{m},
 \widehat{\xi} \rangle =0$ for all $\alpha \in \lbrace1,2,...,p\rbrace$, we infer that the restriction 
 of $\widehat{\beta}_{m}$ to $\widehat{V}_{1}\times \widehat{V}_{1}$ satisfies 
$$\lim _{m\rightarrow \infty}\widehat{\beta}_{m}\big\vert_{\widehat{V}_{1}\times \widehat{V}_{1}}=0$$
and consequently
\begin{equation}
\lim _{m\rightarrow \infty}\big(\widehat{\beta}_{m}\bullet\widehat{\beta}_{m}\big)\Big\vert_{\widehat{V}_{1}\times \widehat{V}_{1}
\times \widehat{V}_{1}\times \widehat{V}_{1}}=0. 
\end{equation} 
From the  inequality 
$$\Big\Vert\Big(\widehat{\beta}_{m}\bullet\widehat{\beta}_{m}-\dfrac{{\mathrm {sc}}(\widehat{\beta}_{m})}{n(n-1)}\langle.,.
\rangle\bullet\langle.,.\rangle\Big)\Big\vert_{\widehat{V}_{1}\times \widehat{V}_{1}\times \widehat{V}_{1}\times 
\widehat{V}_{1}}\Big\Vert^{2}\leq \varphi(\widehat{\beta}_{m}),$$
(2.6) and by virtue of $\lim _{m\rightarrow \infty}\varphi(\widehat{\beta}_{m})=\varphi(\widehat{\beta})=0$, we reach a 
contradiction since ${\mathrm {sc}}(\widehat{\beta})\geq\delta^{2}>0$.

Thus, the sequence $ \lbrace \beta_{m} \rbrace$ is bounded, and it converges to some ${\beta} \in {\mathrm {Hom}}(V\times V, W)$, 
by taking a subsequence if necessary. 

 We claim that $\varphi({\beta})>0.$ Assume to the contrary that $\varphi({\beta})=0$, or  
equivalently 
$${\beta} \bullet {\beta} = \dfrac{{\mathrm {sc}}({\beta})}{n(n-1)}\langle.,.\rangle \bullet \langle.,.\rangle .$$
We notice that ${\beta} \neq0$. Indeed if ${\beta}=0$, then 
${\beta}^{\sharp}(u)=0$ for all $u \in S^{p-1}$. Since $\beta_{m} \in U_{\delta}$, there exists $\xi_{m} \in \Lambda (\beta_{m})$ such that 
\begin{equation}
 \vert \det {\beta}^{\sharp}_{m}(\xi_{m})\vert {\mbox {Vol}} ( \Lambda (\beta_{m}) )=1 {\ }{\ }{\mbox {for all}}{\ }{\ } m \in \mathbb{N}.
\end{equation} 
On account of $\vert\xi_{m}\vert=1$, we may assume that the sequence $\lbrace \xi_{m} \rbrace$ converges to some $\xi \in S^{p-1}$, 
by passing to a subsequence if necessary. Then, from $\lim _{m\rightarrow \infty}\beta_{m}={\beta}$, we get 
$\lim _{m\rightarrow \infty}{\beta}^{\sharp}_{m}(\xi_{m})={\beta}^{\sharp}(\xi)=0$, which contradicts (2.7). 

Therefore ${\beta} \neq0$. From the fact that $\beta_{m} \in U_{\delta}$, we have 
$|{\mathrm {sc}}(\beta_{m})|\geq\delta^{2}\Vert\beta_{m} \Vert ^{2}$. Taking the limit for $m\rightarrow\infty$, 
we deduce that $|{\mathrm {sc}}({\beta})|\geq\delta^{2}\Vert {\beta} \Vert ^{2}>0$.

Then, according to Lemma 2.1, there exists a unit vector ${\xi} \in W$ and a subspace ${V}_{1}\subseteq V$ such that
 ${\mathrm {dim}}{V}_{1}\geq n-p+1$ and
\begin{eqnarray}
{\beta}(x,y)
=\Big({\dfrac{{\mathrm {sc}}({\beta})}{n(n-1)}} \Big)^{{1}/{2}}\langle x,y\rangle \xi 
{\ }{\ }{\mbox {for all}}{\ }{\ } x \in V {\ }{\mbox{and}} {\ } y \in {V}_{1}. 
\end{eqnarray}

By virtue of the fact that $\beta_{m} \in U_{\delta}$, there exists an open subset $\mathcal{U}_{m}$ of $S^{p-1}$ such that 
$$\mathcal{U}_{m}\subseteq \Lambda (\beta_m)\;\; \text{and} \;\;\det \beta^{\sharp}_{m}(u)\neq0\;\; \text{for all} \;\;u
 \in \mathcal{U}_{m}\;\; \text{and} \;\;m \in \mathbb{N}.$$
Since ${\mathrm {sc}}({\beta})>0$, we see that
$$\mathcal{U}_{m}\subseteq  \big\lbrace u \in S^{p-1}: p\leq {\mbox {Index}} ( \beta^{\sharp}_{m}(u)) \leq n-p  \big \rbrace$$
for $m$ large enough.

Let $\lbrace u_{m}\rbrace$ be a sequence such that $u_{m} \in \mathcal{U}_{m}$ for all $m \in \mathbb{N}$. On account of $\vert u_{m} \vert =1$, 
we may assume hereafter that $\lbrace u_{m}\rbrace$ is convergent, by passing if necessary to a subsequence. 
We set $u=\lim _{m\rightarrow \infty}u_{m}$. Since $\lim _{m\rightarrow \infty}{\beta}^{\sharp}_{m}(u_{m})={\beta}^{\sharp}(u)$ and 
$u_{m} \in \mathcal{U}_{m}$, we deduce that ${\mbox {Index}} ({\beta}^{\sharp}(u)) \leq n-p$. 
Then, from (2.8), we get $\langle u,\xi\rangle  \geq 0.$ 
We claim that $\langle u,\xi\rangle  = 0.$ Indeed, if $\langle u,\xi\rangle  > 0,$ then (2.8) implies that
 ${\beta}^{\sharp}(u)$ has at least $n-p+1$ positive eigenvalues, and so, for $m$ 
large enough, ${\beta}^{\sharp}_{m}(u_{m})$ has at least $n-p+1$ positive eigenvalues.
This, on account of the fact that $\det {\beta}^{\sharp}_{m}(u)\neq0$ for all $u \in \mathcal{U}_{m}$, confirms that ${\beta}^{\sharp}_{m}(u_{m})$ 
has at most $p-1$
negative eigenvalues, that is, ${\mbox {Index}} ({\beta}^{\sharp}_{m}(u_{m})) \leq p-1$, contradiction, since $u_{m} \in \mathcal{U}_{m}$.

Thus, we have proved that for any convergent sequence $\lbrace u_{m}\rbrace$ such that $u_{m} \in \mathcal{U}_{m}$ for all $m \in \mathbb{N}$, 
we have $\langle \lim _{m\rightarrow \infty}u_{m}, \xi \rangle =0$.

Since $\mathcal{U}_{m}$ is open, we may choose convergent sequences $\{u^{(1)}_{m}\},\{u^{(2)}_{m}\},...,\{u^{(p)}_{m}\}$ 
such that ${u^{(1)}_{m}},{u^{(2)}_{m}},...,{u^{(p)}_{m}} \in \mathcal{U}_{m}$ and  span $W$ for all $m \in \mathbb{N}$. 
Then, by virtue of (2.8) and the fact that $\langle \lim _{m\rightarrow \infty}u^{(\alpha)}_{m}, \xi \rangle =0$ for 
all $\alpha \in \lbrace1,2,...,p\rbrace$, we infer that the restriction of $\beta_{m}$ to $V_{1}\times V_{1}$ 
satisfies 
$$\lim _{m\rightarrow \infty}\beta_{m}\big\vert_{V_{1}\times V_{1}}=0$$
and consequently
\begin{equation}
\lim _{m\rightarrow \infty}\big(\beta_{m}\bullet\beta_{m}\big)\Big\vert_{V_{1}\times V_{1}\times V_{1}\times V_{1}}=0. 
\end{equation} 
From the inequality 
$$\Big\Vert\Big(\beta_{m}\bullet\beta_{m}-\dfrac{{\mathrm {sc}}({\beta}_{m})}{n(n-1)}\langle.,.
\rangle\bullet\langle.,.\rangle\Big)\Big\vert_{V_{1}\times V_{1}\times V_{1}\times V_{1}}\Big\Vert^{2}\leq \varphi(\beta_{m}),$$
(2.9) and by virtue of $\lim _{m\rightarrow \infty}\varphi(\beta_{m})=\varphi({\beta})=0$, 
we reach a contradiction since ${\mathrm {sc}}({\beta})\geq\delta^{2}\Vert {\beta} \Vert ^{2}>0$.

Consequently, our claim is proved, that is, $\varphi({\beta})>0$ and so  $\varphi$ attains a positive minimum
 on $U_{\delta}$ which obviously depends only on $n,p,\delta$ and is denoted by $c(n,p,\delta)$. 

Now let $\beta\in U_{\delta}$. Assume that $\psi(\beta)\neq 0$ and set $\widetilde{\beta}=\beta/(\psi(\beta))^{{1}/{n}}$. 
Clearly $\widetilde{\beta}\in U_{\delta}$, and consequently $\varphi(\widetilde{\beta})\geq c(n,p,\delta)$. Since $\varphi$ is 
homogeneous of degree $4$, the desired inequality is obviously fulfilled. In the case where $\psi(\beta)=0$, the  inequality is trivial.
\end{proof}

Now we argue on the necessity of the assumption on the scalar curvature of bilinear forms in both Propositions 2.3 and 2.4.
Actually we provide counterexamples that ensure that this assumption cannot be dropped. 

\begin{example}
Let $\{\eta_m\}, \{a^{(j)}_{m}\}, 2 \leq j\leq n, \{b^{(\alpha)}_{m}\}, 2 \leq \alpha \leq p, $ be sequences of real  numbers that tend to zero as 
$m\rightarrow \infty$. Furthermore, we consider convergent sequences $\{\gamma^{(1)}_{m}\}, \{\theta^{(j,\alpha)}_m  \},
 2 \leq j\leq n, 2 \leq \alpha \leq p,$ such that
$$\eta_m^{{2(n-1)}/{n}}\sum_{j=2}^n(a^{(j)}_{m})^2+\sum_{\alpha=2}^p\sum_{j=2}^n(\theta^{(j,\alpha)}_m)^2=1$$
and
$$(\gamma^{(1)}_{m})^2=1-\eta_m^2-\eta_m^{{2(n-2)}/{n}}\sum_{\alpha=2}^p(b^{(\alpha)}_{m})^2.$$

We now define the sequence $\{\gamma_m\}$ in ${\mathrm {Hom}}(\mathbb{R}^n\times \mathbb{R}^n, \mathbb{R}^p)$ by 
\begin{eqnarray*}
 \gamma_m(x,y)=\big(\gamma^{(1)}_{m}x_1y_1+\eta_m^{{2(n-1)}/{n}}\sum_{j=2}^{n}a^{(j)}_{m}x_jy_j\big)\xi_1\;\;\;\;\;\;\;\;\;\;\;\;\;\;\;\;\\
+\eta_m^{(n-2)/{n}}
\sum_{\alpha=2}^{p} \big(b^{(\alpha)}_{m}x_1y_1+\eta_m^{{2}/{n}}\sum_{j=2}^{n}\theta^{(\alpha,j)}_{m}x_jy_j\big)\xi_{\alpha},
\end{eqnarray*}
where $x=(x_1,\dots,x_n), y=(y_1,\dots,y_n)$ and $\xi_1,\dots,\xi_p$ is the standard basis of $\mathbb{R}^p$. The sequences are chosen so that 
$$p\leq {\mbox {Index}}\; \Big({\mbox {diag}}(\gamma^{(1)}_{m},\eta_m^{{2(n-1)}/{n}}a^{(2)}_{m},\dots,\eta_m^{{2(n-1)}/{n}}a^{(n)}_{m})\Big)
 \leq n-p$$
 and 
$$p\leq {\mbox {Index}}\; \Big({\mbox {diag}}(b^{(\alpha)}_{m},\eta_{m}^{{2}/{n}}\theta^{(\alpha,2)}_{m},\dots,
\eta_{m}^{{2}/{n}}\theta^{(\alpha,n)}_{m})\Big) \leq n-p$$
 for any $2\leq \alpha \leq p.$ This implies that there exists an open subset $\Omega $ of $S^{p-1}$ such that $\Omega \subseteq \Omega (\gamma_m)$ 
for all $m \in \mathbb{N}$.

A direct computation shows that 
\begin{equation*}
 \Big\Vert  \gamma_m \bullet \gamma_m -\dfrac{{\mathrm {sc}}(\gamma_m)}{n(n-1)} \langle.,.\rangle \bullet \langle.,.\rangle  \Big\Vert ^{2}=
\eta_{m}^{{4(n-1)}/{n}}\rho_m,
\end{equation*}
where
\begin{eqnarray*}
\rho_m= 32 \sum_{j=2}^n 
\Big(\gamma^{(1)}_{m}a^{(j)}_{m}+\sum_{\alpha=2}^{p}b^{(\alpha)}_{m}\theta^{(j,\alpha)}_m
-\frac{2}{n(n-1)}\sum_{t=2}^n\gamma^{(1)}_{m}a^{(t)}_{m}\;\;\;\;\;\;\;\;\;\;\;\;\;\;\;\;\;\\
-\frac{2}{n(n-1)}\sum_{t=2}^n\sum_{\alpha=2}^{p}b^{(\alpha)}_{m}\theta^{(t,\alpha)}_m
-  \frac{\eta_{m}^{{2}/{n}}}{n(n-1)} \sum_{s\neq t, s,t\geq2} \theta^{(t,\alpha)}_m  \theta^{(s,\alpha)}_m  \Big)^2\\
+16\sum_{i\neq j, i,j \geq2}^n 
\Big(\eta_{m}^{{2}/{n}}  \sum_{\alpha=2}^{p}  \theta^{(i,\alpha)}_m \theta^{(j,\alpha)}_m
-  \frac{2}{n(n-1)}  \sum_{t=2}^n\gamma^{(1)}_{m}a^{(t)}_{m}\;\;\;\;\;\;\;\;\;\;\;
\;\;\;\;\;\\-  
\frac{2}{n(n-1)}  \sum_{t=2}^n\sum_{\alpha=2}^{p}b^{(\alpha)}_{m}\theta^{(t,\alpha)}_m
 -  \frac{\eta_{m}^{{2}/{n}}}{n(n-1)} \sum_{s\neq t, s,t\geq2} \theta^{(t,\alpha)}_m  \theta^{(s,\alpha)}_m  \Big)^2.
\end{eqnarray*}

Moreover, writing $u=\sum_{\alpha}u_{\alpha}\xi_{\alpha}$, we see that
$$\int_{\Omega} \vert \det \gamma_m^{\sharp}(u)\vert dS_{u}=
\eta_{m}^{n-1}\sigma_m,$$
where
$$\sigma_m=
\int_{\Omega} \Big\vert \big(u_{1}\gamma_m^{(1)} +\eta_m^{(n-2)/n}{\sum_{\alpha=2}^{p}u_{\alpha} 
 } \big)\prod^n_{j=2}\big(\eta_m^{(n-2)/n}   a_m^{(j)}   +   \sum_{\alpha=2}^{p} u_{\alpha} \theta^{(j,\alpha)}_m \big)
\Big\vert dS_{u}.$$
We observe that $\lim _{m\rightarrow \infty}\rho_m =0$ and
$$\lim _{m\rightarrow \infty}\sigma_m=\int_{\Omega} \Big\vert u_{1} 
  \prod^n_{j=2}\sum_{\alpha=2}^{p} u_{\alpha} \theta^{(j,\alpha)}
\Big\vert dS_{u},$$
where $\theta^{(j,\alpha)}=\lim _{m\rightarrow \infty}\theta^{(j,\alpha)}_m$. 
We may also choose the sequences so that 
$$\int_{\Omega} \Big\vert u_{1} 
  \prod^n_{j=2}\sum_{\alpha=2}^{p} u_{\alpha} \theta^{(j,\alpha)}
\Big\vert dS_{u}>0.$$
For instance, we may choose the sequences $\{\theta^{(j,\alpha)}_m  \},
 2 \leq j\leq n,$ such that
 $$\lim _{m\rightarrow \infty}(\theta^{(\alpha,2)}_{m}\dots\theta^{(\alpha,n)}_{m})\neq 0 \;\; \text{for a fixed} \;\; 2\leq \alpha \leq p.$$
Hence
\begin{equation*}
\lim _{m\rightarrow \infty} \dfrac{\Big\Vert  \gamma_m \bullet \gamma_m -\dfrac{{\mathrm {sc}}(\gamma_m)}{n(n-1)} \langle.,
.\rangle \bullet \langle.,.\rangle  \Big\Vert ^{2}}{\Big(     \int_{\Omega} \vert \det \gamma_m^{\sharp}(u)\vert dS_{u}  \Big)^{4/n}}=
\lim _{m\rightarrow \infty}\dfrac{\rho_m}{(\sigma_m)^{4/n}}=0,
\end{equation*}
which implies that 
\begin{equation}
\lim _{m\rightarrow \infty} \dfrac{\Big\Vert  \gamma_m \bullet \gamma_m -\dfrac{{\mathrm {sc}}(\gamma_m)}{n(n-1)} \langle.,
.\rangle \bullet \langle.,.\rangle  \Big\Vert ^{2}}{\Big(   
  \int_{\Omega(\gamma_m)} \vert \det \gamma_m^{\sharp}(u)\vert dS_{u}  \Big)^{4/n}}=0.
\end{equation}
Furthermore, we notice that $\|\gamma_m\|^2=1$ and $\lim _{m\rightarrow \infty}{\mathrm {sc}}(\gamma_m)=0.$

This together with (2.10) show that there exist no positive constant depending only on $n,p$ such that  the inequality in Proposition 2.4 
 hold without the condition on the scalar curvature.

\end{example}

\begin{example}
We consider a sequence $\{\gamma_m\}$ in ${\mathrm {Hom}}(\mathbb{R}^n\times \mathbb{R}^n, \mathbb{R}^p)$ as in the previous example.
Arguing as before, we similarly conclude that 
\begin{equation*}
\lim _{m\rightarrow \infty} \dfrac{\big\Vert  \gamma_m \bullet \gamma_m   \big\Vert ^{2}}{\Big(   
  \int_{\Omega(\gamma_m)} \vert \det \gamma_m^{\sharp}(u)\vert dS_{u}  \Big)^{4/n}}=0.
\end{equation*}
This proves that there exist no positive constant depending only on $n,p$ such that  the inequality in Proposition 2.3 
 hold  for $k=0$ without the condition on the scalar curvature.

Now let $k\neq0$. We choose all  sequences in Example 2.2 so that  $k{\mathrm {sc}}(\gamma_m)>0$ for all $m \in \mathbb{N}$, and
 we consider the sequence $\{\beta_m\}$ given by
$$\beta_m=\dfrac{n(n-1)k}{{\mathrm {sc}}(\gamma_m)}\gamma_m.$$
We have
\begin{equation*}
\dfrac{\big\Vert  \beta_m \bullet \beta_m -k \langle.,
.\rangle \bullet \langle.,.\rangle  \big\Vert ^{2}}{\Big(   
  \int_{\Omega_k(\beta_m)} \vert \det \beta_m^{\sharp}(u)\vert dS_{u}  \Big)^{4/n}}=
\dfrac{\Big\Vert  \gamma_m \bullet \gamma_m -\dfrac{{\mathrm {sc}}(\gamma_m)}{n(n-1)} \langle.,
.\rangle \bullet \langle.,.\rangle  \Big\Vert ^{2}}{\Big(   
  \int_{\Omega(\gamma_m)} \vert \det \gamma_m^{\sharp}(u)\vert dS_{u}  \Big)^{4/n}}
\end{equation*}
and on account of (2.10), we get
\begin{equation*}
\lim _{m\rightarrow \infty} \dfrac{\big\Vert  \beta_m \bullet \beta_m -k \langle.,
.\rangle \bullet \langle.,.\rangle  \big\Vert ^{2}}{\Big(   
  \int_{\Omega_k(\beta_m)} \vert \det \beta_m^{\sharp}(u)\vert dS_{u}  \Big)^{4/n}}=0.
\end{equation*}
Furthermore,  since $\|\gamma_m\|^2=1$ and $\lim _{m\rightarrow \infty}{\mathrm {sc}}(\gamma_m)=0$, we obtain
$$\lim _{m\rightarrow \infty}\dfrac{|{\mathrm {sc}}(\beta_m)|}{\|\beta_m\|^2}=0.$$

From these we conclude that there exist no positive constant depending only on $n,p,k$ such that  the inequality in Proposition 2.3 
 hold without the condition on the scalar curvature.
\end{example}

\begin{remark}
We have not explicitly  computed the constants that appear in Propositions  2.3 and 2.4.
 However these constants have the following property:
$$\liminf_{\delta\rightarrow 0^{+}} c(n,p,k,\delta)=0\;\;\text{and}\;\;\liminf_{\delta\rightarrow 0^{+}} c(n,p,\delta)=0.$$
We can easily get upper bounds for them. 
For instance, applying the inequality in Proposition 2.4 to the bilinear form $\beta$ in
 ${\mathrm {Hom}}(\mathbb{R}^n\times \mathbb{R}^n, \mathbb{R}^p)$
 given by 
$$\beta(x,y)=\big(\sum_{i=1}^{l}x_iy_i-\sum_{i=l+1}^{n}x_iy_i \big) \xi,$$ 
 where $1\leq l\leq n$,  $x=(x_1,\dots,x_n), y=(y_1,\dots,y_n)$  and $\xi$ is a unit vector in $W$, we find the following estimate

\begin{equation*}
c(n,p,\delta)\leq \dfrac{2^{4/n}\Big(8n^2(n-1)^2-\big((n-2l)^2-n  \big)^2\Big)
}{4n(n-1)\Big(\int_{S^{p-1}} |\langle u, \xi \rangle|^ndS_u\Big)^{4/n}}         
{\ }{\ } {\mbox{for}} {\ }{\ } \delta^{2}\leq \dfrac{(n-2l)^2-n}{n}.
\end{equation*}
\end{remark}

\section{Proofs}
At first we recall some basic and well known facts about total curvature and how Morse theory provides restrictions on the Betti numbers.

Let $f:(M^n,g)\longrightarrow\mathbb{R}^{n+p}$  be an isometric immersion of a compact, connected, oriented $n$-dimensional Riemannian
 manifold $(M^n,g)$ into the $(n+p)$-dimensional Euclidean space $\mathbb{R}^{n+p}$ equipped with the usual Riemannian metric
 $\langle .,. \rangle $. The normal bundle of $f$ is given by
\begin{equation*}
N_{f}=\lbrace(x,\xi)\in f^{*}(T\mathbb{R}^{n+p}):  \xi \perp df_{x}(T_{x}M) \rbrace,
\end{equation*}
where $f^{*}(T\mathbb{R}^{n+p})$ stands for the induced bundle, and the unit normal bundle of $f$ is defined by 
\begin{equation*}
UN_{f}=\lbrace(x,\xi)\in N_{f}:  |\xi | =1 \rbrace.
\end{equation*}

The generalized Gauss map $\nu:UN_{f}\longrightarrow {S}^{n+p-1}$ is given by $\nu(x,\xi)=\xi$, where ${S}^{n+p-1}$ is the unit 
$(n+p-1)$-sphere in $\mathbb{R}^{n+p}$. For each $u \in {S}^{n+p-1},$ we consider the height function $h_{u}:M^n\longrightarrow \mathbb{R}$ 
defined by $h_{u}(x)=\langle f(x),u\rangle, x\in M^n$. Since $h_{u}$ has a degenerate critical point if and only if $u$ is a 
critical value of the generalized Gauss map, by Sard's Theorem there exists a subset $E\subset{S}^{n+p-1}$ of zero measure such that $h_{u}$ 
is a Morse function for all $u \in {S}^{n+p-1} \setminus E$. For every $u \in {S}^{n+p-1} \setminus E$, we denote by $\mu_{i}(u)$ 
the number of critical points of $h_{u}$ of index $i$.  We also set $\mu_{i}(u)=0$ for any $u \in E$. According to Kuiper \cite{K}, 
the total curvature of index $i$ of $f$ is given by
\begin{equation*}
 \tau_{i}=\dfrac{1}{\mathrm{Vol}({S}^{n+p-1})}\int_{{S}^{n+p-1}}\mu_{i}(u)dS_{u},
\end{equation*}
where $dS$ denotes the volume element of the sphere ${S}^{n+p-1}$.

Let  $\beta_{i}=\dim H_{i}(M;\mathcal{F})$ be the $i$-th Betti number of $M$, 
where $H_{i}(M;\mathcal{F})$ is the $i$-th homology group with coefficients in a field $\mathcal{F}$. From the weak Morse inequalities \cite{M}, 
we know that $\mu_i(u)\geq \beta_{i}$ for every $u \in {S}^{n+p-1}$
such that $h_{u}$ is a Morse function. Integrating over ${S}^{n+p-1}$, we obtain 
\begin{equation}
\tau_i \geq \beta_{i}. 
\end{equation}

For each $(x,\xi) \in UN_{f},$ we denote by $A_{\xi}$ the shape operator of $f$ associated with the direction $\xi$ given by 
\begin{equation*}
g\big(A_{\xi}(X),Y\big) =   \langle\alpha(X,Y),\xi\rangle, 
\end{equation*}
where $X,Y$ are tangent to $M$ and $\alpha$ is the second fundamental form of $f$ viewed as a section of the vector bundle 
${\mathrm {Hom}}(TM\times TM, N_{f})$. There is a natural volume element $d\Sigma$ on the unit normal bundle $UN_{f}$. In fact, if $dV$ is 
a $(p-1)$-form on $UN_{f}$ such that its restriction to a fiber of the unit normal bundle at $(x,\xi)$ is the volume element of the 
unit $(p-1)$-sphere $S_x^{p-1}$ of the normal space of $f$ at $x,$ then $d\Sigma=dM\wedge dV$. Furthermore, we have 
\begin{equation*}
\nu^{*}(dS)=G(x,\xi)d\Sigma,
\end{equation*}
where $G(x,\xi):=(-1)^{n}\det A_{\xi}$ is the Lipschitz-Killing curvature at $(x,\xi) \in UN_{f}.$

A well-known formula due to Chern and Lashof \cite{CL2} states that 
\begin{equation}
 \int_{UN_{f}}|\det A_{\xi}|d\Sigma =  \sum_{i=0}^{n} \int_{{S}^{n+p-1}}\mu_{i}(u) dS_{u}.
\end{equation}

The total absolute curvature  $\tau (f)$ of $f$ in the sense of Chern and Lashof is defined by
\begin{equation*}
 \tau (f)=\dfrac{1}{\mathrm{Vol}({S}^{n+p-1})}\int_{UN_{f}}|\nu^{*}(dS)|=
\dfrac{1}{\mathrm{Vol}({S}^{n+p-1})}\int_{UN_{f}}|\det A_{\xi}|d\Sigma. 
\end{equation*} 

The following result is due to Chern and Lashof \cite{CL1,CL2}.

\begin{theorem}
Let $f:(M^n,g)\longrightarrow\mathbb{R}^{n+p}$ be an isometric immersion of a compact, connected, oriented, 
$n$-dimensional Riemannian manifold $(M^n,g)$ into $\mathbb{R}^{n+p}$. Then the total absolute curvature of $f$ satisfies the inequality
\begin{equation*}
\tau (f)\geq \sum_{i=0}^{n}\beta_{i}.
\end{equation*} 
\end{theorem}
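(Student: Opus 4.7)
The plan is to combine the two ingredients that have already been assembled in this section: the weak Morse inequality $\tau_i \geq \beta_i$ recorded in (3.1) for every $0 \leq i \leq n$, and the Chern--Lashof integral identity (3.2), which expresses the integral of the absolute Lipschitz--Killing curvature over $UN_f$ in terms of the critical point counts of the height functions.

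The first step is to sum the Morse inequality $\tau_i \geq \beta_i$ over $i=0,1,\ldots,n$. Since the sum is finite, Fubini allows the exchange of summation with the integration over $S^{n+p-1}$ that appears in the definition of $\tau_i$, giving
$$\sum_{i=0}^n \tau_i \;=\; \frac{1}{\mathrm{Vol}(S^{n+p-1})}\int_{S^{n+p-1}}\sum_{i=0}^n \mu_i(u)\, dS_u \;\geq\; \sum_{i=0}^n \beta_i.$$
The second step is to rewrite the right-hand side of the above equality by invoking the Chern--Lashof formula (3.2), which identifies it with
$$\frac{1}{\mathrm{Vol}(S^{n+p-1})}\int_{UN_f}|\det A_\xi|\, d\Sigma,$$
and this in turn is, by definition, the total absolute curvature $\tau(f)$. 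Chaining these relations yields $\tau(f) \geq \sum_{i=0}^n \beta_i$, which is the claim.

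There is essentially no conceptual obstacle in carrying this out: the substantive work lies in the weak Morse inequalities of Morse theory and in Chern and Lashof's integral formula, both of which have already been cited. The only routine points to verify are that $\tau_i$ is well-defined in view of the Sard-type argument identifying the set $E$ of non-Morse directions as having measure zero, and that the interchange of the finite sum $\sum_{i=0}^n$ with the integral is legitimate, which is automatic.
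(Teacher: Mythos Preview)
Your argument is correct: summing the weak Morse inequality $\tau_i \geq \beta_i$ from (3.1) over $i$ and then identifying $\sum_{i=0}^n \tau_i$ with $\tau(f)$ via the Chern--Lashof identity (3.2) is exactly the standard derivation. The paper does not give its own proof of this theorem---it is simply quoted as a result of Chern and Lashof \cite{CL1,CL2}---but the ingredients (3.1) and (3.2) assembled in this section are precisely those needed, and your chaining of them is the expected argument.
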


For each $i \in \lbrace 0,...,n\rbrace$, we consider the subset $U^{i}N_{f}$ of the unit normal bundle of $f$ defined by 
\begin{equation*}
U^{i}N_{f}=\big\lbrace(x,\xi)\in UN_{f}: {\mathrm {Index}}(A_{\xi})=i  \big\rbrace.
\end{equation*}
 Shiohama and Xu \cite[Lemma p. 381]{SX} refined formula (3.2) as follows
\begin{equation}
 \int_{U^{i}N_{f}}|\det A_{\xi}|d\Sigma =  \int_{{S}^{n+p-1}} \mu_{i}(u) dS_{u}.
\end{equation}

We recall that the Riemannian curvature tensor of $(M^n,g)$ is the (0,4)-tensor $R$ that is related to the second fundamental form $\alpha$ 
via the Gauss equation
\begin{equation*}
R(X,Y,Z,W) = \langle \alpha(X,W), \alpha(Y,Z)\rangle-\langle \alpha(X,Z), \alpha(Y,W)\rangle, 
\end{equation*}
where $X,Y,Z,W$ are tangent vector fields of $M^n$.

By means of the Kulkarni-Nomizu product, the Gauss equation is written as
\begin{equation}
 R=-\dfrac{1}{2}\alpha \bullet \alpha.
\end{equation} 
Moreover, we consider the (0,4)-tensor $R_{1}$ given by
\begin{equation}
 R_{1}=-\dfrac{1}{2}g \bullet g.
\end{equation} 
\medskip 
We are now ready to give the proofs of the main results.

\begin{proof}[Proof of Theorem 1.1.]
Let $f:(M^n,g)\longrightarrow\mathbb{R}^{n+p}$ be an isometric immersion whose scalar curvature and 
 the second fundamental form satisfy $|\mathrm{scal}|\geq\lambda\Vert\alpha\Vert^2$. 
Appealing to Proposition 2.3, we have
\begin{equation*}
\big\Vert \alpha \bullet \alpha -kg \bullet g \big\Vert^{n/2}(x)\geq \big(c(n,p,k,\lambda)\big)^{n/4}\int_{\Omega_k (\alpha |_{x})} 
\vert \det A_{\xi}\vert dV_{\xi}
\end{equation*}
for all $x \in M^n$, where 
$$
\Omega_k (\alpha|_{x})=\left\{\begin{array}{l}\big\lbrace u \in S_x^{p-1}: p\leq {\mbox {Index}} ( \beta^{\sharp}(u) ) \leq n-p  \big \rbrace
\;\;\;\mbox{if}\;\;k>0,
\vspace*{1.5ex}\\
\;\;\;\;\;\;\;\;\;\;\;\;\;\;\;\;\;\;\;\;\;\;\;\;\;\;\;\;S_x^{p-1}\;\;\;\;\;\;\;\;\;\;\;\;\;\;\;\;\;\;\;\;\;\;\;\;\;\;\;\;\; \mbox{if}\;\;k\leq 0.
\end{array} \right.
$$
On account of  (3.4) and (3.5), the above inequality becomes 
\begin{equation*}
\big\Vert R -kR_{1} \big\Vert^{n/2}(x)\geq \big(\frac{1}{4}c(n,p,k,\lambda)\big)^{n/4}\int_{\Omega (\alpha|_{x})} \vert
 \det A_{\xi}\vert dV_{\xi}
\end{equation*}
for all $x \in M^n$. 
Integrating over $M^n$, we obtain
\begin{equation*}
\int _{M}\big\Vert R -kR_{1}\big\Vert^{n/2}dM \geq \big(\frac{1}{4}c(n,p,k,\lambda)\big)^{n/4} \sum_{i\in J} \int 
_{U^{i}N_{f}}\vert \det A_{\xi} \vert d\Sigma,
\end{equation*}
where
$$
J:=\left\{\begin{array}{l}\lbrace  p, \dots, n-p   \rbrace
\;\;\;\;\mbox{if}\;\;k>0,
\vspace*{1.5ex}\\
\lbrace  0, \dots, n  \rbrace\;\;\;\;\;\;\;\;\;\; \mbox{if}\;\;k\leq 0.
\end{array} \right.
$$
Bearing in mind (3.3), the definition of the total curvature of index $i$ and (3.1), 
we get
\begin{equation}
\int _{M}\big\Vert R -kR_{1} \big\Vert^{n/2}dM \geq \varepsilon(n,k,\lambda) 
\sum_{i\in J}\tau_{i}\geq\varepsilon(n,k,\lambda)\sum_{i\in J}\beta_{i},
\end{equation}
where the constant $\varepsilon(n,k,\lambda)$ is given by 
$$\varepsilon(n,k,\lambda):
=\min_{2\leq p \leq n/2} \big(\frac{1}{4}c(n,p,k,\lambda)\big)^{n/4}\mathrm{Vol}({S}^{n+p-1}).$$ 

Now suppose that $k>0$ and $\int _{M}\Vert R-kR_{1}\Vert^{n/2}dM <  \varepsilon(n,k,\lambda)$.
Then, in view of (3.6), we conclude that $\sum_{i=p}^{n-p} \tau_{i}<1$. Thus, there exists $u \in S^{n+p-1}$ 
such that the height function $h_{u}:M^n\longrightarrow \mathbb{R}$ is a Morse function whose number of critical points 
of index $i$ satisfies $\mu_{i}(u)=0$ for any $p\leq i\leq n-p$.  Appealing to the fundamental theorem of Morse theory
 (cf. \cite[Th. 3.5]{M} or \cite[Th. 4.10]{CE}), we deduce that $M^n$ has the homotopy type of a CW-complex 
with no cells of dimension $i$ for $p\leq i\leq n-p$.

For $k \leq 0$, from (3.6) it follows that every $M^n \in \mathcal{M}(n,k,\lambda,a)$ admits an isometric immersion into 
$\mathbb{R}^{n+p}, 2\leq p \leq n/2,$ and a height function $h$ 
with at most 
$a/\varepsilon(n,k,\lambda)$ critical points. Let $b_1< \dots <b_r$ be the critical values of $h$. From Morse Theory we know that
for any $b_i<t_1<t_2<b_{i+1}$, 
every connected component of $h^{-1}([t_1,t_2])$ is homeomorphic to $S^{n-1} \times [t_1,t_2]$. Hence there exists a number $s$ depending only on 
$n,k,\lambda,a$ such that the number of $n$-disks needed to cover $M^n$ is at most $s$.
\end{proof}

\begin{proof}[Proof of Theorem 1.2.]
Let $f:(M^n,g)\longrightarrow\mathbb{R}^{n+p}$ be an isometric immersion whose scalar curvature and 
 the second fundamental form satisfy $|\mathrm{scal}|\geq\lambda\Vert\alpha\Vert^2$. 
Appealing to Proposition 2.4, we have
\begin{equation*}
\Big\Vert \alpha \bullet \alpha -\dfrac{{\mathrm{scal}}}{n(n-1)}g \bullet g \Big\Vert^{n/2}(x)\geq
 \big(c(n,p,\lambda/)\big)^{n/4}\int_{\Omega(\alpha |_{x})} \vert \det A_{\xi}\vert dV_{\xi}
\end{equation*}
for all $x \in M^n$. 
By virtue of (3.4) and (3.5), the above inequality becomes 
\begin{equation*}
\Big\Vert R -\dfrac{{\mathrm{scal}}}{n(n-1)}R_{1} \Big\Vert^{n/2}(x)\geq
 \big(\frac{1}{4}c(n,p,\lambda)\big)^{n/4}\int_{\Omega(\alpha |_{x})} \vert \det A_{\xi}\vert dV_{\xi}
\end{equation*}
for all $x \in M^n$. 
Integrating over $M^n$, we obtain
\begin{equation*}
\int _{M}\Big\Vert R -\dfrac{{\mathrm{scal}}}{n(n-1)}R_{1}\Big\Vert^{n/2}dM \geq
 \big(\frac{1}{4}c(n,p,\lambda)\big)^{n/4} \sum_{i=p}^{n-p} \int _{U^iN_{f}}\vert \det A_{\xi} \vert d\Sigma.
\end{equation*}
Then the rest of the proof is the same as in Theorem 1.1.

If the scalar curvature of $M^n$ is non-positive, then  Proposition 2.4 yields  
\begin{equation*}
\Big\Vert \alpha \bullet \alpha -\dfrac{{\mathrm{scal}}}{n(n-1)}g \bullet g \Big\Vert^{n/2}(x)\geq
 \big(c(n,p,\lambda)\big)^{n/4}\int_{S_{x}^{p-1}} \vert \det A_{\xi}\vert dV_{\xi}
\end{equation*}
for all $x \in M^n$.  In view  of (3.4) and (3.5), the above inequality becomes 
\begin{equation*}
\Big\Vert R -\dfrac{{\mathrm{scal}}}{n(n-1)}R_{1} \Big\Vert^{n/2}(x)\geq
 \big(\frac{1}{4}c(n,p,\lambda)\big)^{n/4}\int_{S_{x}^{p-1}} \vert \det A_{\xi}\vert dV_{\xi}
\end{equation*}
for all $x \in M^n$. 
Integrating over $M^n$, we obtain
\begin{equation*}
\int _{M}\Big\Vert R -\dfrac{{\mathrm{scal}}}{n(n-1)}R_{1}\Big\Vert^{n/2}dM \geq
 \big(\frac{1}{4}c(n,p,\lambda)\big)^{n/4}  \int _{UN_{f}}\vert \det A_{\xi} \vert d\Sigma.
\end{equation*}
Bearing in mind the definition of the total absolute curvature $\tau (f)$ of $f$, 
we finally get
\begin{equation}
\int _{M}\Big\Vert R -\dfrac{{\mathrm{scal}}}{n(n-1)}R_{1} \Big\Vert^{n/2}dM \geq \varepsilon(n,\lambda) \tau (f),
\end{equation}
where  
$$\varepsilon(n,\lambda):=\min_{2\leq p \leq n/2}\big(\frac{1}{4}c(n,p,\lambda)\big)^{n/4}\mathrm{Vol}({S}^{n+p-1}).$$ 
Thus, the desired inequality follows immediately from (3.7) and Theorem 3.1.

The proof that the class $\mathcal{M}(n,\lambda,a)$ contains at most finitely many homeomorphism types is the same as in Theorem 1.1.
\end{proof}

\begin{proof}[Proof of Corollary 1.1.]
 We shall prove that $M^n$ is simply connected. Assume to the contrary that $M^n$ is not simply connected. 
Since the fundamental form $\pi_{1}(M^n)$ is finite, it contains a subgroup isomorphic to $\mathbb{Z}_{q}$ for 
some prime $q$. Let $\pi:\widetilde{M}^n\longrightarrow M^n$ be the Riemannian covering of $M^n$ corresponding to $\mathbb{Z}_{q}$. 
 Then $\widetilde{M}^n$ is compact, and we may appeal to Theorem 1.2 for the isometric immersion $f\circ\pi $, 
to conclude that  $H_{p}(\widetilde{M}^n;{\mathbb{Z}_{q}})=0$. 
 According to a deep result due
 to B${\ddot {\mbox {o}}}$hm and Wilking \cite{BW}, the universal covering of $\widetilde{M}^n$ is diffeomorphic to the sphere $S^{n}.$ 
Then, from the spectral sequence of the covering $S^{n}\longrightarrow \widetilde{M}^n$, we deduce that 
\begin{equation*}
H_{p}(\widetilde{M}^n;{\mathbb{Z}_{q}})\simeq H_{p}\big(K({\mathbb{Z}_{q}},1);{\mathbb{Z}_{q}}\big)={\mathbb{Z}_{q}},
\end{equation*}
contradiction. Thus, $M^n$ is simply connected and according to \cite{BW} it is diffeomorphic to the sphere $S^{n}.$
\end{proof}

\bibliographystyle{amsplain}

\begin{thebibliography}{18}

\bibitem {BW} C. B${\ddot {\mbox {o}}}$hm and B. Wilking, \textit{Manifolds with positive curvature operators are space forms},  
Ann. of Math. (2) \textbf{167} (2008), 1079--1097.

\bibitem{CaDa} M. do Carmo and M. Dajczer, \textit{Conformal rigidity}, Amer. J. Math.
\textbf{109} (1987),  963--985.

\bibitem {Ca} \'{E}. Cartan, \textit{Sur les vari\'{e}t\'{e}s de courbure constante d'un espace euclidien ou non-euclidien},  
Bull. Soc. Math. France \textbf{47} (1919), 125--160.

\bibitem {CE} J. Cheeger and D. Ebin, Comparison theorems in Riemannian geometry. North-Holland Mathematical Library, Vol. 9. 
North-Holland Publishing Co., Amsterdam-Oxford; American Elsevier Publishing Co., Inc., New York, 1975. 

\bibitem {CK} S.S. Chern and N.H. Kuiper, \textit{Some theorems on the isometric imbedding of compact Riemann manifolds  
in Euclidean space},  Ann. of Math. (2) \textbf{56} (1952), 422--430.

\bibitem {CL1} S.S. Chern and R.K. Lashof, \textit{On the total curvature of immersed manifolds}, Amer. J. Math. 
\textbf{79} (1957), 306--318.

\bibitem {CL2} S.S. Chern and R.K. Lashof, \textit{On the total curvature of immersed manifolds II}, Michigan Math. J. 
\textbf{5} (1958), 5--12.

\bibitem {Da} M. Dajczer, Submanifolds and isometric immersions. Based on the notes prepared by Mauricio Antonucci, Gilvan Oliveira, 
Paulo Lima-Filho and Rui Tojeiro. Mathematics Lecture Series, 13. Publish or Perish, Inc., Houston, TX, 1990.

\bibitem{Da1} M. Dajczer and L. Rodriguez, \textit{Rigidity of real Kaehler submanifolds}, Duke Math. J. 
\textbf{53} (1986),  211--220.

\bibitem{Da2} M. Dajczer and L.A. Florit, \textit{Compositions of isometric immersions in higher codimension},
manuscr. math. \textbf{105} (2001), 507--517.

\bibitem {K} N.H. Kuiper, \textit{Minimal total absolute curvature for immersions}, Invent. Math. 
\textbf{10} (1970), 209--238.

\bibitem {M} J. Milnor, Morse theory. Based on lecture notes by M. Spivak and R. Wells. Annals of Mathematics Studies, No. 51 
Princeton University Press, Princeton, N.J. 1963.

\bibitem{Mo1} J.D. Moore, \textit{Codimension two submanifolds of positive curvature},  Proc. Amer. Math. Soc.  \textbf{70}  (1978), 72--74. 

\bibitem{Mo2} J.D. Moore, \textit{Submanifolds of constant positive curvature. I.},  Duke Math. J. \textbf{44} (1977), 449--484.

\bibitem{Mo3} J.D. Moore, \textit{Conformally flat submanifolds of Euclidean space},  Math. Ann.  \textbf{225}  (1977), 89--97.

\bibitem{Mo4} J.D. Moore, \textit{Euler characters and submanifolds of constant positive curvature},  Trans. Amer. Math. Soc.  
\textbf{354} (2002), 3815--3834. 

\bibitem {O} T. Otsuki, \textit{Isometric embedding of Riemann manifolds in a Riemann manifold},  J. Math. Soc. Japan \textbf{6} (1954), 221--234.

\bibitem {SX} K. Shiohama and H. Xu, \textit{Lower bound for $L^{n/2}$ curvature norm and its application}, J. Geom. Anal.
\textbf{7} (1997), 377--386.




\end{thebibliography}

\end{document}